\numberwithin{equation}{section}
\newcommand{\be}{\begin{equation}}
\newcommand{\ee}{\end{equation}}
\newcommand{\ba}{\begin{array}}
\newcommand{\ea}{\end{array}}
\newcommand{\bea}{\begin{eqnarray*}}
\newcommand{\eea}{\end{eqnarray*}}
\newcommand{\bean}{\begin{eqnarray}}
\newcommand{\eean}{\end{eqnarray}}
\newtheorem{theorem}{Theorem}
\newtheorem{proposition}{Proposition}
\newtheorem{lemma}{Lemma}
\newtheorem{corollary}{Corollary}
\newtheorem{remark}{Remark}
\def\Re{{\text{\rm Re}}}
\def\Ima{{\text{\rm Im}}}
\def\Sess{\sigma_{\text{\rm ess}}}
\def\a{{\alpha}}
\def\d{{\delta}}
\def\e{{\varepsilon}}
\def\vf{{\varphi}}
\def\R{{\mathbf R}}
\def\Id{{\text{\rm Id}}}
\def\Ker{{\text{\rm Ker}}}
\def\div{{\text{\rm div}}}
\def\ds{{\displaystyle}}
\newcommand{\finproof}
{%
\mbox{}%
\nolinebreak%
\hfill%
\rule{2mm}{2mm}%
\medbreak%
\par%
}
\title{The plasmonic resonances of a bowtie antenna}
\author{
Eric Bonnetier 
\thanks{Institut Fourier, 
Universit\'e Grenoble-Alpes, 
BP 74, 38402 Saint-Martin-d'H\`eres Cedex, France,
({\tt Eric.Bonnetier@univ-grenoble-alpes.fr}).}
\and
Charles Dapogny 
\thanks{Laboratoire Jean Kuntzmann, 
Universit\'e Grenoble-Alpes, 
700 Avenue Centrale,
38401 Domaine Universitaire de Saint-Martin-d'H\`eres, France,
({\tt Charles.Dapogny@univ-grenoble-alpes.fr}).}
\and
Faouzi Triki
\thanks{
Laboratoire Jean Kuntzmann, 
Universit\'e Grenoble-Alpes, 
700 Avenue Centrale,
38401 Domaine Universitaire de Saint-Martin-d'H\`eres, France,
({\tt Faouzi.Triki@univ-grenoble-alpes.fr}).}
\and
Hai Zhang
\thanks{Department of Mathematics, 
Hong Kong University of Science and Technology, 
Clear Water Bay, Kowloon, HK,
({\tt haizhang@ust.hk}). }
}
\begin{document}

\maketitle

\begin{abstract}
\noindent Metallic bowtie-shaped nanostructures are very interesting objects in optics,
due to their capability of localizing and enhancing electromagnetic fields
in the vicinity of their central neck.
In this article, we investigate the electrostatic plasmonic resonances of two-dimensional bowtie-shaped domains by looking at the spectrum of their 
Poincar\'e variational operator. In particular, we show that the latter only consists of essential spectrum and fills the whole interval $[0,1]$. 
This behavior is very different from what occurs in the counterpart situation of a bowtie domain with only close-to-touching wings,
a case where the essential spectrum of the Poincar\'e variational operator is reduced to an interval strictly contained in $[0,1]$. 
We provide an explanation for this difference by showing that the spectrum of
the Poincar\'e variational operator of 
bowtie-shaped domains with close-to-touching wings has eigenvalues which densify and eventually fill the remaining intervals as the distance between the two wings tends to zero.
\end{abstract}

\section{Introduction}

Surface plasmons are strongly localized electromagnetic fields
that result from electron oscillations on the surface of metallic particles.
Typically, this resonant behavior occurs when the real parts of 
the dielectric coefficients of the particles are negative, and when
their size is comparable to or smaller than the wavelength of the excitation.
For instance, this is the case of gold or silver nanoparticles,
20-50 nm in diameter, 
when they are illuminated in the frequency range of visible light.
\medskip

The ability to confine, enhance and control electromagnetic fields in regions
of space smaller than or of the order of the excitation wavelength has stirred
considerable interest in surface plasmons over the last decade, as it opens the door
to a large number of applications in the domains of nanophysics, 
near-field microscopy, bio-sensing, nanolithography, and quantum computing, 
to name a few.
\medskip

A great deal of the mathematical work about plasmons has focused on the so-called 
electrostatic case, where the Maxwell system is reduced to a Helmholtz 
equation, and in the asymptotic limit when the particle diameter is small
compared to the frequency $\omega$ of the incident wave.
After proper rescaling, the study amounts to that of a conduction equation of the form
\begin{eqnarray}~\label{eq_cond1}
\textrm{div}\left(\e(\omega)^{-1}(x) \nabla u(x)) \right) &=& 0,
\end{eqnarray}
complemented with appropriate boundary or radiation conditions; see \cite{Hai_1,Hai_2} for a mathematical justification.
The electric permittivity $\e(\omega)$ in (\ref{eq_cond1}) takes different forms in the dielectric ambient medium, and inside the particle; in the latter situation, it
is usually modeled by a Drude-Lorentz law of the form:
\begin{eqnarray*}
\e(\omega) &=& \e_0\left( 1 -  \ds\frac{\omega_p^2}
{\omega^2 + i \omega \gamma} \right).
\end{eqnarray*}
where $\e_0$ is the electric permittivity of the vacuum, and where
$\omega_p$ and $\gamma$ respectively denote the plasma frequency and the conductivity
of the medium; see~\cite{MayergoyzFredkinZhang, Mayergoyz, Grieser, Hai_1, Hai_2, Hai_3}.
In the case of metals such as gold and silver, experimental data show that, for frequencies in the range $200-700$ $\mu$m, $\Re(\e(\omega)) < 0$, 
while the rate $\Ima(\e(\omega))$ of dissipation of electrostatic energy is small.
In this context, (\ref{eq_cond1}) gets close to a two-phase conduction equation with sign-changing coefficients, and it loses its elliptic character.
\medskip

In the above electrostatic approximation, the plasmonic resonances of a particle $D$ embedded in a homogeneous medium of permittivity 
$\e_0$ are precisely associated with values of the permittivity $\e$ inside the particle for which~(\ref{eq_cond1}) 
ceases to be well-posed. If the shape of the particle is sufficiently 
smooth, one may represent the solution $u$ to (\ref{eq_cond1}) via layer potentials, and then
characterize plasmon resonances as values of the contrast 
$\frac{\e + \e_0}{2(\e - \e_0)}$ which are eigenvalues of the associated 
Neumann-Poincar\'e integral operator 
${\mathcal K}^*_D$; see \cite{Mayergoyz, Hai_1}.
\medskip

Due to their key role in various physical contexts, the spectral properties of the Neumann-Poincar\'e operator 
have been the focus of numerous investigations~\cite{AndoKang,AmmariCiraoloKangLeeMilton,BDT,BonnetierTriki,BonnetierTriki_2}.
When the inclusion $D$ is smooth (say with ${\mathcal C}^{1,\a}$ boundary), 
${\mathcal K}^*_D$ is a compact operator, and so its spectrum $\sigma({\mathcal K}_D^*)$ consists in a sequence of
eigenvalues that accumulates to $0$~\cite{KhavisonPutinarShapiro}.
When $D$ is merely Lipschitz, ${\mathcal K}^*_D$ may no longer be compact and 
$\sigma({\mathcal K}_D^*)$ may contain essential spectrum - a fact that has motivated several analytical and numerical 
studies~\cite{PerfektPutinar_1, HelsingPerfekt, HelsingKangLim, KangLimYu}.
This behavior has been understood quite precisely in the particular case where $D$ is a planar domain with corners:
in their recent work~\cite{PerfektPutinar_2}, K.-M. Perfekt and M. Putinar have characterized
this essential spectrum to be 
\begin{eqnarray*}
\Sess({\mathcal K}^*_D) &=& [\lambda_-, \lambda_+],
\quad \lambda_+ \;=\; -\lambda_- \;=\; \ds\frac{1}{2}(1 - \ds\frac{\a}{\pi}),
\end{eqnarray*}
where $\a$ is the most acute angle of $D$.
In~\cite{BZ}, an alternative proof of this result is given and a connection 
between $\Sess({\mathcal K}^*_D)$ and the elliptic corner singularity 
functions that describe the field $u$ around the corners
is established.\par
\medskip

The main purpose of the present work is to study the spectrum of bowtie-shaped domains 
in 2d~(see Figure \ref{fig.bowtie} below). Metallic bowtie antennas have been the 
subject of extensive experimental studies, as they can produce remarkably large
enhancement of electric fields near their corners, and particularly
in the area of their central neck, which makes them quite interesting in various 
applications, see for instance~\cite{Bi_etal, Cetin, Chen_etal, Ding_etal, Dodson_etal, Lee_etal}.\par
\medskip
In utter rigor, a bowtie-shaped domain $D$ is not Lipschitz regular, since $\partial D$ does not behave as the graph of a
Lipschitz function in the neighborhood of the central point. 
To avoid the tedious issue of introducing a proper definition of the
Poincar\'e-Neumann operator in this context, we take another point of view for characterizing the well-posedness 
of (\ref{eq_cond1}) and thereby the plasmonic resonances of $D$: following the seminal work \cite{KhavisonPutinarShapiro}, we work at the level 
of the so-called Poincar\'e variational
operator $T_D$; see Section \ref{sec.TD}.
For a Lipschitz domain, a simple transformation relates the spectra
of ${\mathcal K}_D^*$ and $T_D$:
\begin{eqnarray*}
\sigma({\mathcal K}^*_D) \;=\; 1/2 - \sigma(T_D)
&\;\textrm{and}\;&
\Sess({\mathcal K}^*_D) \;=\; 1/2 - \Sess(T_D),
\end{eqnarray*}
see for instance~\cite{BZ}. In the context of a bowtie-shaped domain $D$, we prove that the spectrum $\sigma(T_D)$ consists only of essential spectrum, and fills the whole interval $[0,1]$: 
$$ \sigma(T_D) = \Sess(T_D) = [0,1] ;$$
see Theorem \ref{th.specbowtie}.
\par
\medskip
It is also interesting to compare the spectrum of the Poincar\'e variational operator $T_D$ of 
a `true' (non Lipschitz) bowtie-shaped antenna $D$ with that of `quasi' (Lipschitz) bowtie-shaped inclusion $D_\d$ - a version of $D$ 
where the two wings of the bowtie are separated by a small distance $\d > 0$ (see Figure \ref{fig.bowtiectt} below).
The theory about the essential spectrum of the Neumann-Poincar\'e operator of planar domains with corners devised in \cite{PerfektPutinar_2} applies in the latter case, 
with the conclusion that the essential spectrum of the Neumann-Poincar\'e operator $K^*_{D_\d}$ of $D_\d$ is an interval 
$[-\lambda^+, \lambda^+] \Subset [-1/2,1/2]$, where $\lambda^+$
only depends on the value of the angle(s) of each sector
and is independent of~$\d$. 
We show that as $\d \to 0$, $\sigma(K^*_{D_\d})$ cannot reduce to its
essential spectrum and must contain eigenvalues in the range
$]-1/2,\lambda^-[ \cup ]\lambda^+, 1/2[$.
These eigenvalues become denser and denser in that set as $\d \to 0$. 
This phenomenon was already observed in~\cite{HelsingMcPhedranMilton}
(see also~\cite{Milton} pp. 378-379)
for the related problem of finding the spectrum of the effective permittivity 
of a composite made of square inclusions of a metamaterial
embedded in a dielectric background medium.
See in particular the computations reported in~\cite{HelsingMcPhedranMilton},
and the associated movies~\cite{Helsing_computations}
which show how eigenvalues become denser as the distance between 
the corners of the square inclusions tends to $0$.
The spectrum considered in~\cite{HelsingMcPhedranMilton} is 
closely related to ours: 
see~\cite{BDT} that studies the homogenization limit of the spectrum of 
the Neumann-Poincar\'e operator.\par
\medskip

The present article is organized as follows. The setting and notations
are described in Section \ref{sec_2}, where some background material about plasmonic resonances and the Poincar\'e variational operator is briefly recalled. 
In Section~\ref{sec_3}, we construct
corner singularity functions that describe the behavior of solution
to the transmission problem (\ref{eq_cond1}) near the central neck of a bowtie-shaped domain $D$ when the
permittivity inside $D$ is negative.
Contrarily to the case of connected planar domains with corners 
(see~\cite{Bonnet_etal_1, Bonnet_etal_3, BZ})
these functions always lie outside the energy space $H^1$.
In Section \ref{sec_4}, we use these singular functions to prove that the spectrum of 
$T_D$ is composed only of essential spectrum and occupies the whole interval $[0,1]$.
In Sections~\ref{sec_5} and~\ref{sec_6}, we relate this behavior to
that of the spectrum of a near-bowtie shaped domain $D_\d$, as $\d \to 0$.
This article ends with the short Appendix \ref{sec.weyl} recalling some material about Weyl sequences.

\section{The Poincar\'e variational operator of a bowtie-shaped plasmonic antenna} \label{sec_2}

\subsection{Generalities about plasmonic resonances}\label{sec.plasm}

Let $\Omega \subset \R^2$ denote a bounded open set with smooth boundary,
containing the origin. 
Throughout the article, a point $x \in \R^2$ shall be indifferently represented in terms of its Cartesian coordinates $x = (x_1,x_2)$ 
or its polar coordinates with origin $0$, as $x = (r,\theta)$. 
Also, for $\rho >0$, we denote by $B_\rho$ (resp. $B_\rho(x)$) 
the open ball with center $0$ (resp. $x$) and radius $\rho$. \par
 
\medskip 

Let $D \Subset \Omega$ be an open set, representing an inclusion in $\Omega$; for the moment, no particular assumption is made about the regularity of $D$. 
As we have hinted at in the introduction, the plasmonic resonances of the inclusion $D$ are described in terms of the conduction equation for 
the voltage potential $u$:
\begin{equation} \label{eq_cond}
\left\{ \begin{array}{ccll}
-\textrm{div}(a(x) \nabla u(x)) &=& f &\textrm{in}\; \Omega,
\\
u(x) &=& 0 & \textrm{on}\; \partial \Omega,
\end{array} \right.
\end{equation}
where $f$ is a source in $H^{-1}(\Omega)$, and the conductivity $a(x)$ is piecewise constant:
\begin{eqnarray} \label{def_a}
a(x) &=&
\left\{ \begin{array}{cl}
k \in {\mathbf C}& \quad x \in D,
\\
1 & \quad x \in \Omega \setminus \overline{D}.
\end{array} \right.
\end{eqnarray}

Classical results from the theory of elliptic PDE's show that when 
$k \in \mathbf{C} \setminus \R^-$, the equation (\ref{eq_cond}) 
has a unique solution $u\in H^1_0(\Omega)$, which satisfies: 
\begin{eqnarray*}
||u||_{H^1_0(\Omega)} &\leq& C(k) \, ||f||_{H^{-1}(\Omega)},
\end{eqnarray*}
where the constant $C(k) > 0$ depends on $k$.
In the above relation, and throughout this article, the space $H^1_0(\Omega)$ is equipped with the following inner product and
associated norm
\begin{eqnarray*}
<u,v>_{H^1_0(\Omega)} \;=\; \ds\int_\Omega \nabla u \cdot \nabla v \,dx,
&\text{and}&
||u||_{H^1_0(\Omega)} \;=\; 
\left(\ds\int_\Omega |\nabla u|^2 \,dx
\right)^{1/2}.
\end{eqnarray*}
\medskip

Our main purpose is to describe the quasistatic \textit{plasmonic resonances}
of $D$; these are defined as the values $k \in \mathbf{C}$ of the conductivity
inside $D$ such that there exists a bounded sequence $f_n$ of sources in $H^{-1}(\Omega)$ - say $\lvert\lvert f_n\lvert\lvert_{H^{-1}(\Omega)} = 1$ - 
such that there exists a sequence $u_n$ of associated voltage potentials, solution to (\ref{eq_cond}), which blows up: $\lvert\lvert u _n\lvert\lvert_{H^1_0(\Omega)} \to \infty$ as $n\to \infty$.
\par\medskip

\begin{remark}
In our setting, the considered inclusion $D$ is embedded in a large (yet bounded) `hold-all' 
domain $\Omega$, and not in the free space $\R^2$ as is customary in the study of the Neumann-Poincar\'e operator (see e.g. \cite{BonnetierTriki_2,KhavisonPutinarShapiro}). 
This is only a matter of simplicity, since we intend to focus on the properties of $D$ and not on those of its surrounding environment. The present 
study could easily be adapted to the case where $\Omega = \R^2$, 
by using weighted Sobolev spaces instead of $H^1_0(\Omega)$ as energy space.
\end{remark}

\subsection{The Poincar\'e variational operator, and its connection with the Neumann-Poincar\'e operator in the case of a Lipschitz inclusion}

\subsubsection{The Poincar\'e variational operator and the conduction equation}\label{sec.TD}

Following the lead of the seminal work \cite{KhavisonPutinarShapiro}, 
a convenient tool in our study of the plasmonic resonances of $D$ is the Poincar\'e variational operator $T_D : H^1_0(\Omega) \to H^1_0(\Omega)$, 
defined as follows: for $u \in H^1_0(\Omega)$, $T_Du $ is the unique function in $H^1_0(\Omega)$ such that:
\begin{eqnarray} \label{def_T}
\forall \; v \in H^1_0(\Omega), \quad
\ds\int_\Omega \nabla (T_D u) \cdot \nabla v \,dx
&=& 
\ds\int_D \nabla u \cdot \nabla v \,dx.
\end{eqnarray}
The link between $T_D$ and the conduction equation (\ref{eq_cond}) is the following: a simple calculations shows 
that $u \in H^1_0(\Omega)$ satisfies (\ref{eq_cond}) if and only if:
\begin{equation}\label{eq.eqconducTD}
 (\beta \Id - T_D) u = \beta g, \text{ where }\beta := \frac{1}{1-k}
 \end{equation}
and where $g$ is obtained from $f$ via the Riesz representation theorem
$$ 
\forall v \in H^1_0(\Omega), \:\: \int_\Omega{\nabla g \cdot \nabla v \:dx} = \langle f,v \rangle_{H^{-1}(\Omega),H^1_0(\Omega)}.
$$

In the same spirit, the Poincar\'e variational operator offers a convenient characterization of the plasmonic resonances of $D$:
\begin{proposition} \label{prop_plasm_res}
Let $k\in \mathbf{C}$, $k\neq 1$, and let the conductivity $a(x) \in L^\infty(\Omega)$ 
be defined as (\ref{def_a}). 
The following statements are equivalent:
\begin{itemize}
\item[1.] There exists a sequence $u_n \in H^1_0(\Omega)$ 
such that
\begin{eqnarray} \label{cond_1}
|| \: \div(a \nabla u_n) \:||_{H^{-1}(\Omega)} \;=\; 1
&\textrm{and}\;& ||u_n||_{H^1_0(\Omega)} \to \infty.
\end{eqnarray}

\item[2.] The conductivity $k$ inside $D$ is such that
$\beta := \ds\frac{1}{1-k}$ belongs to the spectrum $\sigma(T_D)$ of $T_D$.
\end{itemize}
\end{proposition}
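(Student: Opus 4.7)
The plan is to reduce both statements to assertions about the invertibility of $\beta \Id - T_D$ on $H^1_0(\Omega)$, by combining three ingredients: the equivalence (\ref{eq.eqconducTD}), the fact that the Riesz map $f \mapsto g$ is an isometry from $H^{-1}(\Omega)$ onto $H^1_0(\Omega)$ equipped with its natural inner product, and the boundedness and self-adjointness of $T_D$ (symmetry of the bilinear form $(u,v) \mapsto \int_D \nabla u \cdot \nabla v$, together with its domination by the $H^1_0$ inner product). Since $k \in \mathbf{C}$ is finite with $k \neq 1$, the quantity $\beta = 1/(1-k)$ is a well-defined nonzero complex number, so scalar rescalings by $\beta$ are legitimate.

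For the direction (1) $\Rightarrow$ (2), I argue by contraposition. Given $u_n$ satisfying (\ref{cond_1}), set $f_n := -\div(a\nabla u_n)$ and let $g_n \in H^1_0(\Omega)$ denote its Riesz representative, so that $\|g_n\|_{H^1_0(\Omega)} = \|f_n\|_{H^{-1}(\Omega)} = 1$. Then (\ref{eq.eqconducTD}) gives $(\beta \Id - T_D) u_n = \beta g_n$, whence $\|(\beta \Id - T_D) u_n\|_{H^1_0(\Omega)} = |\beta|$ while $\|u_n\|_{H^1_0(\Omega)} \to \infty$. If $\beta$ were not in $\sigma(T_D)$, the boundedness of the inverse $(\beta \Id - T_D)^{-1}$ would yield the contradictory uniform estimate $\|u_n\|_{H^1_0(\Omega)} \leq |\beta| \cdot \|(\beta \Id - T_D)^{-1}\|$.

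For the reverse direction (2) $\Rightarrow$ (1), Weyl's criterion (Appendix \ref{sec.weyl}) applied to the self-adjoint bounded operator $T_D$ furnishes a sequence $v_n \in H^1_0(\Omega)$ with $\|v_n\|_{H^1_0(\Omega)} = 1$ and $(\beta \Id - T_D) v_n \to 0$. When $(\beta \Id - T_D) v_n \neq 0$ for infinitely many $n$, the rescaling $u_n := |\beta| v_n / \|(\beta \Id - T_D) v_n\|_{H^1_0(\Omega)}$ along that subsequence satisfies $\|(\beta \Id - T_D) u_n\|_{H^1_0(\Omega)} = |\beta|$ and $\|u_n\|_{H^1_0(\Omega)} \to \infty$, and reversing the reformulation through (\ref{eq.eqconducTD}) and the Riesz isometry recovers (\ref{cond_1}). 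The main obstacle is the degenerate situation where $\beta$ is an eigenvalue and $v_n \in \ker(\beta \Id - T_D)$ for all $n$, which forbids this rescaling. I resolve it by a perturbation: pick a unit eigenvector $w$ and any $h \in H^1_0(\Omega)$ scaled so that $\|(\beta \Id - T_D) h\|_{H^1_0(\Omega)} = |\beta|$, and set $u_n := nw + h$; then $(\beta \Id - T_D) u_n = (\beta \Id - T_D) h$ has constant norm $|\beta|$, while $\|u_n\|_{H^1_0(\Omega)} \geq n - \|h\|_{H^1_0(\Omega)} \to \infty$. The existence of such an $h$ follows from $T_D \neq \beta \Id$: indeed, if $T_D = \beta \Id$, then testing against any nonzero $v \in H^1_0(\Omega)$ supported in $\Omega \setminus \overline{D}$ would give $0 = \int_D |\nabla v|^2 = \langle T_D v, v\rangle_{H^1_0(\Omega)} = \beta \|v\|_{H^1_0(\Omega)}^2$, forcing the contradiction $\beta = 0$.
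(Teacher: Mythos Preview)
Your proof is correct and follows essentially the same approach as the paper's: both directions hinge on the identity $\|\div(a\nabla u)\|_{H^{-1}(\Omega)} = |1-k|\,\|(\beta\Id - T_D)u\|_{H^1_0(\Omega)}$, with $(2)\Rightarrow(1)$ obtained by rescaling a Weyl sequence and $(1)\Rightarrow(2)$ by the bounded-inverse criterion (the paper phrases the latter as ``a similar argument allows to construct a Weyl sequence''). Your explicit treatment of the eigenvalue case via $u_n = nw + h$ is a cleaner alternative to the paper's one-line ``small perturbation'' remark, but the underlying argument is the same.
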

\begin{proof}
Let us first assume that $\beta = \frac{1}{k-1}$ is in $\sigma(T_D)$. 
By the Weyl criterion - see Theorem \ref{th.weyl} in Appendix \ref{sec.weyl} - there exists a sequence $u_n \in H^1_0(\Omega)$ such that: 
$$ \lvert\lvert u_n \lvert\lvert_{H^1_0(\Omega)} = 1 \text{ and } \lvert\lvert T_D u_n - \beta u_n \lvert\lvert_{H^1_0(\Omega)}  \xrightarrow{n \to \infty} 0.$$
Up to making a small perturbation of the $u_n$, one may additionnally assume that $ \lvert\lvert T_D u_n - \beta u_n \lvert\lvert_{H^1_0(\Omega)} \neq 0$ for all $n$. 
Now, let $v_n := \frac{1}{ (k-1)\lvert\lvert T_D u_n - \beta u_n \lvert\lvert_{H^1_0(\Omega)}} u_n$; obviously, $\lvert\lvert v_n \lvert\lvert_{H^1_0(\Omega)} \to \infty$ as $n \to \infty$, while the definition of $T_D$ implies:
$$\begin{array}{>{\displaystyle}cc>{\displaystyle}l}
\lvert\lvert \div(a\nabla v_n) \lvert\lvert_{H^{-1}(\Omega)} &=&  \sup\limits_{w \in H^1_0(\Omega), \atop \lvert \lvert w \lvert\lvert_{H^1_0(\Omega)} = 1}{\int_{\Omega}{a(x)\nabla v_n \cdot \nabla w \:dx}} \\
&=& (k-1) \sup\limits_{w \in H^1_0(\Omega), \atop \lvert \lvert w \lvert\lvert_{H^1_0(\Omega)} = 1}{\int_{\Omega}{\nabla(T_D v_n - \beta v_n) \cdot \nabla w \:dx}}, \\
&=&1.
\end{array}$$
Hence, the sequence $v_n$ satisfies (\ref{cond_1}). \par\medskip

Conversely, if there exists a sequence $u_n \in H^1_0(\Omega)$ such that (\ref{cond_1}) holds, a similar argument allows to 
construct a Weyl sequence for $T_D$ and the value $\beta = \frac{1}{1-k}$, so that $\beta$ belongs to $\sigma(T_D)$. This concludes the proof. 
\end{proof}

We may therefore look for the plasmonic resonances of the inclusion $D \Subset \Omega$ by searching for the values of the conductivity
$k \in \mathbf{C}$ inside $D$ such that $\beta = \frac{1}{1-k} \in \sigma(T_D)$.
This remark motivates the study of the spectrum $\sigma(T_D)$. 

\subsubsection{Structure of the spectrum of the Poincar\'e variational operator of a Lipschitz regular inclusion}\label{sec.sPV0lip}

In this section, we assume $D$ to be Lipschitz regular; for further purpose, we also allow $D$ to consist of several connected components: $D = \bigcup_{i=1}^N{D_i}$, $i=1,...,N$. 
The following proposition outlines the general structure of the spectrum $\sigma(T_D)$;
see~\cite{BDT} for a proof.

\begin{proposition} \label{prop_TDd}
The operator $T_{D}$ is bounded, self-adjoint, with operator norm $||T_{D}||=1$.
Moreover, 
\begin{itemize}
\item[(i)]
Its spectrum $\sigma(T_{D})$ is contained in the interval $[0,1]$.

\item[(ii)] The eigenspace associated to the eigenvalue $0$ is:
\begin{eqnarray*}
\Ker(T_{D}) &=&
\{ u \in H^1_0(\Omega),\: \exists c_i \in \R, \: u = c_i \text{ in } D_i, i=1,...,N \}.
\end{eqnarray*}

\item[(iii)] The value $1$ belongs to $\sigma(T_{D})$ and the associated eigenspace is:
\begin{eqnarray*}
\Ker(\Id-T_{D}) &=&
\{ u \in H^1_0(\Omega), u = 0 \;\textrm{ in }  \Omega \setminus \overline{D_\d} \}; 
\end{eqnarray*}
and $\Ker(\Id - T_{D})$ can be identified with $H^1_0(D)$.

\item[(iv)]
The space $H^1_0(\Omega)$ has the orthogonal decomposition:
\begin{eqnarray}\label{eq.decHd}
H^1_0(\Omega) &=&
\Ker(T_{D}) \oplus \Ker(\Id-T_{D}) \oplus {\mathcal H}_D,
\end{eqnarray}
where ${\mathcal H}_D$, the `non trivial' part of $\sigma(T_D)$, is the closed subspace of $H^1_0(\Omega)$ defined by
\begin{eqnarray} \label{eq.defcalH}
{\mathcal H}_D &=&
\left\{ u \in H^1_0(\Omega), \: \Delta u = 0 \;
\textrm{in}\; D \cup (\Omega \setminus \overline{D}) \text{ and }
\ds\int_{\partial D_{i}} \ds\frac{\partial u^+}{\partial \nu} \, ds = 0, \:\: i=1,...,N \right\}.
\end{eqnarray}
\end{itemize}
\end{proposition}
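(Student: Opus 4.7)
Parts (i)–(iii) unfold more or less directly from the defining identity (\ref{def_T}), while (iv) requires a real argument based on orthogonality and integration by parts. For (i), I observe that the bilinear form $a(u,v) := \int_D \nabla u\cdot\nabla v\,dx$ is symmetric, nonnegative and dominated on $H^1_0(\Omega)\times H^1_0(\Omega)$ by the $H^1_0(\Omega)$ inner product, so that (\ref{def_T}) defines $T_D$ as a bounded self-adjoint operator with $0\leq\langle T_Du,u\rangle\leq\|u\|^2$. Hence $0\leq T_D\leq \Id$, so $\sigma(T_D)\subset[0,1]$, and the operator norm is attained by any nonzero $u\in H^1_0(\Omega)$ compactly supported in a single component of $D$, for which $T_Du=u$. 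For (ii), setting $v=u$ in (\ref{def_T}) turns $T_Du=0$ into $\int_D|\nabla u|^2=0$, whence $u$ is constant on each $D_i$; the converse is trivial. For (iii), the same test with $v=u$ applied to $T_Du=u$ gives $\int_{\Omega\setminus\overline{D}}|\nabla u|^2=0$, and combined with the Dirichlet condition on $\partial\Omega$ this forces $u\equiv 0$ on $\Omega\setminus\overline{D}$, so that $u$ restricts to an element of $H^1_0(D)$.

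\medskip

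The substantive content is (iv). Since $T_D$ is self-adjoint, the orthogonal decomposition
\[
H^1_0(\Omega) \;=\; \Ker(T_D) \,\oplus\, \Ker(\Id-T_D) \,\oplus\, \mathcal H_D'
\]
holds immediately, where $\mathcal H_D' := \bigl(\Ker(T_D)\oplus\Ker(\Id-T_D)\bigr)^\perp$, so the task reduces to the identification $\mathcal H_D' = \mathcal H_D$. For $u\in\mathcal H_D'$, orthogonality to $\Ker(\Id-T_D) = H^1_0(D)$ reads $\int_D\nabla u\cdot\nabla\varphi\,dx = 0$ for every $\varphi\in H^1_0(D)$, which yields $\Delta u = 0$ in $D$. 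To exploit orthogonality to $\Ker(T_D)$, I test against two families both lying in $\Ker(T_D)$ by part (ii): first, $H^1_0(\Omega)$-functions compactly supported in $\Omega\setminus\overline{D}$, producing $\Delta u = 0$ on $\Omega\setminus\overline{D}$; and second, for each $i$, a cutoff $v_i\in H^1_0(\Omega)$ equal to $1$ on $D_i$ and $0$ on the other $D_j$'s (available since the components $D_i$ are disjoint and compactly contained in $\Omega$). Integration by parts on $\Omega\setminus\overline{D}$, using the harmonicity just obtained, exhibits the zero-flux condition $\int_{\partial D_i}\partial_\nu u^+\,ds = 0$. The converse inclusion $\mathcal H_D\subset\mathcal H_D'$ simply runs the same integration-by-parts computations backwards.

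\medskip

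The main obstacle — and the only real use of the Lipschitz regularity of $\partial D$ — is the rigorous interpretation of this flux identity: one needs the conormal trace $\partial_\nu u^+\in H^{-1/2}(\partial D_i)$ of the harmonic $H^1$-function $u$ on $\Omega\setminus\overline{D}$ to be well-defined, which is guaranteed by the Lipschitz assumption via the standard Green's formula on Lipschitz domains. Everything else — existence of the cutoffs $v_i$, of the compactly supported test functions, and the closedness of $\mathcal H_D$ as a subspace of $H^1_0(\Omega)$ — is elementary given that regularity.
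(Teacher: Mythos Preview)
Your proof is correct and follows essentially the same approach as the paper: the paper defers the proof of Proposition~\ref{prop_TDd} to~\cite{BDT}, but reproduces the argument for the analogous Proposition~\ref{prop_TD}, and your strategy---testing with $v=u$ for (ii) and (iii), then characterizing $\mathcal{H}_D$ via orthogonality using compactly supported test functions in $\Omega\setminus\overline{D}$ and cutoffs equal to $1$ on each $D_i$---matches that argument line for line. Your observation that the Lipschitz hypothesis enters only through the well-definedness of $\partial_\nu u^+ \in H^{-1/2}(\partial D_i)$ is also precisely the point the paper singles out in the remark following Proposition~\ref{prop_TD}.
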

In the above proposition, we have denoted by $\nu$ the unit normal vector to the Lipschitz boundary $\partial D$ pointing outward $D$;
for a.e. $x \in \partial D$ and for any smooth enough function $w$, the traces $w^\pm$ and normal derivatives $\frac{\partial w^\pm}{\partial \nu}$ of $w$ are respectively defined by:
\begin{eqnarray*}
w^\pm(x) &=& \lim_{t \to 0 \atop t > 0} w(x \pm t \nu(x)), \text{ and } \frac{\partial w^\pm}{\partial \nu}(x) = \lim_{t\to 0 \atop t >0}{\nabla w(x\pm t\nu(x)) \cdot \nu(x)}. 
\end{eqnarray*}
Note that these identities have to be considered in the weaker sense of traces - in $H^{1/2}(\partial D)$ and $H^{-1/2}(\partial D)$ respectively - if less regularity is assumed on $w$, as is the case in (\ref{eq.defcalH}). 

\subsubsection{Connection with the Neumann-Poincar\'e operator when $D$ is Lipschitz}\label{sec.NPOlip}

In this section again, we assume $D$ to be Lipschitz regular. 
As we have mentionned in the introduction, the operator $T_D$ has then close connections with the Neumann-Poincar\'e operator ${\mathcal K}_D^* : H^{-1/2}(\partial D) \to H^{-1/2}(\partial D)$ of 
the inclusion $D$, which we now briefly recall. \par
\medskip 

Let $P(x,y)$ denote the Poisson kernel associated to $\Omega$, defined by
\begin{eqnarray*}
P(x,y) &=& G(x,y) + R_x(y), \quad x,y \in \Omega, x \neq y,
\end{eqnarray*}
where $G(x,y)$ is the Green function in the two-dimensional free space:
\begin{eqnarray*}
G(x,y) &=& \ds\frac{1}{2\pi} \log|x-y|,
\end{eqnarray*}
and for a given $x \in \Omega$, $R_x(y)$ is the smooth solution to
\[
\left\{ \begin{array}{clcl} 
\Delta_y R_x(y) &=& 0 & y \in \Omega,
\\
R_x(y) &=& -G(x,y) & y \in \partial \Omega, 
\end{array} \right.
\]
see for instance \cite{AmmariKang}. 
Thence ,the single layer potential
$S_D\vf $ of a function $\vf \in L^2(\partial D)$ is defined by
\begin{eqnarray*}
S_D\vf(x) &=&
\int_{\partial D} P(x,y) \vf(y)\,ds(y),
\quad x \in D \cup (\Omega \setminus \overline{D}).
\end{eqnarray*}
It is well-known~\cite{Folland,Verchota} that $S_D\vf$ belongs to the space $\mathfrak{h}_D$ defined by
$$ Ê\mathfrak{h}_D := \left\{Êu \in H^1_0(\Omega), \:\: \Delta u = 0 \text{ in } D \cup (\Omega \setminus \overline{D}) \right\};$$
notice that $\mathfrak{h}_D$ is slightly larger than its subspace ${\mathcal H}_D$ defined in (\ref{eq.defcalH}) (they differ by a finite-dimensional space).
Additionally, the definition of $S_D$ extends to potentials $\varphi \in H^{-1/2}(\partial D)$ \cite{mclean}, 
and the induced mapping $S_D : H^{-1/2}(\partial D) \to \mathfrak{h}_D$ is an isomorphism \cite{BonnetierTriki_2}.

The normal derivatives of the single layer potential across $\partial D$ satisfy the Plemelj jump conditions
\begin{eqnarray} \label{jump_S}
\ds\frac{ \partial S_D \vf}{\partial \nu}^\pm
&=&
\left(\pm\ds\frac{1}{2}\Id + {\mathcal K}^*_D \right)\vf,
\end{eqnarray}
where ${\mathcal K}^*_D : L^2(\partial D) \to L^2(\partial D)$ is the Neumann-Poincar\'e operator of $D$, 
defined by
\begin{eqnarray*}
{\mathcal K}^*_D\vf(x) &=&
\ds\int_{\partial D} \ds\frac{\partial P}{\partial \nu_x}(x,y)
\vf(y) \,ds(y),
\end{eqnarray*}
whose definition makes sense for Lipschitz domains~\cite{Coifman_etal,Verchota}. 
In turn, ${\mathcal K}_D^*$ extends as an operator
$H^{-1/2}(\partial D) \rightarrow H^{-1/2}(\partial D)$; see \cite{mclean}.

\medskip 

Eventually, the Poincar\'e variational operator $T_D: \mathfrak{h}_D \to \mathfrak{h}_D$ and the Neumann-Poincar\'e operator ${\mathcal K}_D^* : H^{-1/2}(\partial D) \to H^{-1/2}(\partial D)$ are related as: 
$$ R_D = - S_D \circ {\mathcal K}_D^* \circ S_D^{-1}, \text{ where }ÊR_D := T_D - \frac{1}{2}\Id;$$
see \cite{BonnetierTriki_2,KhavisonPutinarShapiro}. In particular, the spectra of $T_D$ and ${\mathcal K}_D^*$ are equal, up to a constant shift: 
$$
\sigma({\mathcal K}^*_D) \;=\; 1/2 - \sigma(T_D)
\textrm{ and }
\Sess({\mathcal K}^*_D) \;=\; 1/2 - \Sess(T_D),$$
and the plasmonic resonances of $D$ may be equivalently studied from the vantage of $T_D$ or ${\mathcal K}_D^*$.

\subsection{The case of a bowtie-shaped antenna}\label{sec.prespb}

\subsubsection{Presentation of the physical setting}\label{sec.physbt}

From now on and in the remaining of this article, we assume that $D$ 
is shaped as a bowtie (and hence is not Lipschitz)~:
$D = D_1 \cup D_2$ is the reunion of two connected domains
whose boundaries are smooth except at $0$, and there exist $r_0 > 0$ and $0 < \a < \pi$ such that:
\begin{eqnarray*}
D_1 \cap B_{r_0} &=& 
\{ (r\cos \theta, r\sin\theta), \:0 < r < r_0, \: -\a/2 \leq \theta \leq \a/2 \},
\\
D_2 \cap B_{r_0} &=& 
\{(r\cos\theta, r\sin\theta), \: 0 < r < r_0,  \:\pi-\a/2 \leq \theta \leq \pi+\a/2 \};
\end{eqnarray*}
see Figure \ref{fig.bowtie}.
We refer to $D_1$ and $D_2$ as the `wings' of the bowtie
(after all, 'bowtie' translates as `n\oe ud papillon' in French).

\begin{figure}[!ht]
\centering
\includegraphics[width=0.6\textwidth]{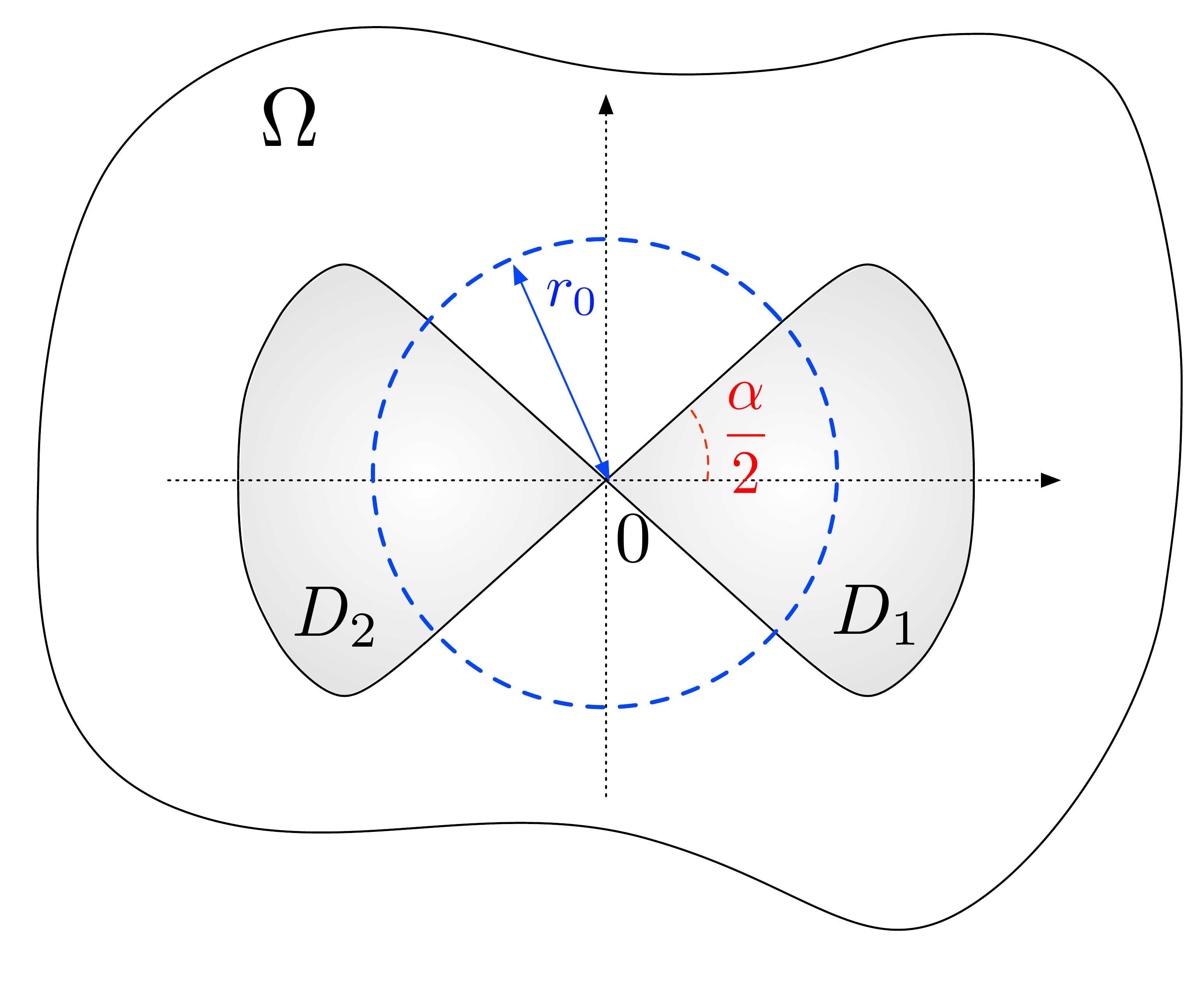}
\caption{\it Setting of the bowtie-shaped domain presented in Section \ref{sec.prespb}.}
\label{fig.bowtie}
\end{figure}

\begin{remark}
We have assumed $D$ to be smooth except at the contact point $0$ between the
wings $D_1$ and $D_2$. Our analysis remains valid if 
$D_1$ and $D_2$ have additionnal corners (for instance if they are shaped as triangles,
as is often the case in actual physical devices). Indeed, as we show below, it is 
the contact point between the two wings that carries the worst singularity and determines the width of the essential spectrum of the Poincar\'e variational 
operator of $D$.
\end{remark}

\subsubsection{The Poincar\'e variational operator of a bowtie-shaped antenna}\label{sec.TD}

The bowtie-shaped domain $D$ of Section \ref{sec.prespb} 
fails to be Lipschitz regular, since it does not arise as 
the subgraph of a Lipschitz function in the vicinity of the point $0$.
Rather than defining and studying an adapted Neumann-Poincar\'e operator (see \cite{AmBonTriVog} however for a related construction),
we base our study of the well-posedness of~(\ref{eq_cond}) on the Poincar\'e
variational operator, whose definition (\ref{def_T}) naturally makes sense in the case of domains like $D$.\par
\medskip

Since the set $D$ is not Lipschitz regular, some care is in order about the definition of the attached functional spaces.
We denote by $H^1(D)$ is the set of functions on $D$ which are restrictions 
to $D$ of functions in $H^1(\R^2)$ and by $H^1_0(D)$ the closure of 
${\mathcal C}^\infty_c(D)$ in $H^1_0(\Omega)$.
Also, $\widetilde{H}^1(D)$ is the set of functions $u\in H^1(D)$ whose extension to $\Omega$ by $0$ is in $H^1_0(\Omega)$. 
Let us recall that, if ${\mathcal O} \Subset \Omega$ is a Lipschitz domain 
$\widetilde{H}^1({\mathcal O} ) = H^1_0({\mathcal O} )$; see \cite{Grisvard}. Unfortunately, the bowtie-shaped domain $D$ is not Lipschitz, but this property 
nevertheless holds, as we now prove:

\begin{lemma}\label{lem.ht}
Let $D$ be a bowtie as described in Section \ref{sec.physbt}.
Then $\widetilde{H}^1(D) = H^1_0(D)$.
\end{lemma}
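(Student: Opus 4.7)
The inclusion $H^1_0(D) \subset \widetilde{H}^1(D)$ is immediate from the definitions: any $u \in H^1_0(D)$ is the $H^1_0(\Omega)$-limit of a sequence $\varphi_n \in C^\infty_c(D)$, and each $\varphi_n$ coincides with its own extension by $0$ to $\Omega$, whence so does the limit $u$. For the reverse inclusion, the starting observation is that, even though $D$ itself fails to be Lipschitz at $0$, each individual wing $D_i$ \emph{is} a Lipschitz domain: it is smooth away from $0$, and at $0$ its boundary is a single corner of aperture $\alpha \in (0, \pi)$, which is locally the subgraph of a Lipschitz function. The result cited from \cite{Grisvard} therefore yields $\widetilde{H}^1(D_i) = H^1_0(D_i)$ for $i=1,2$, and the task reduces to transferring the global information $\tilde{u} \in H^1_0(\Omega)$ into wing-by-wing information $u|_{D_i} \in \widetilde{H}^1(D_i)$.

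To carry out this reduction, let $u \in \widetilde{H}^1(D)$ with zero extension $\tilde{u} \in H^1_0(\Omega)$. The restriction $u|_{D_i}$ belongs to $H^1(D_i)$ and the plan is to show that its trace on $\partial D_i$ vanishes. The key geometric fact is that $\partial D_i \setminus \{0\} \subset \partial D$: indeed, if $p \in \partial D_i$ with $p \neq 0$, then $p \notin \overline{D_j}$ for $j \neq i$ (since $\overline{D_1}\cap\overline{D_2}=\{0\}$), so a small enough ball around $p$ is disjoint from $D_j$, and the exterior of $D_i$ near $p$ lies entirely in $\Omega \setminus \overline{D}$. Because $\tilde{u} \in H^1(\Omega)$ vanishes identically on $\Omega \setminus \overline{D}$, its trace on $\partial D_i \setminus \{0\}$ taken from outside $D_i$ vanishes, and the matching of interior and exterior traces forced by $\tilde u \in H^1(\Omega)$ then implies that the trace of $u|_{D_i}$ on $\partial D_i \setminus \{0\}$ is zero as well. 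Since $\{0\}$ has zero one-dimensional measure on the curve $\partial D_i$, the trace of $u|_{D_i}$ vanishes in $H^{1/2}(\partial D_i)$, and the Lipschitz character of $D_i$ gives $u|_{D_i} \in H^1_0(D_i)$.

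It remains to approximate and recombine. For each $i$, pick $\varphi_n^{(i)} \in C^\infty_c(D_i)$ with $\varphi_n^{(i)} \to u|_{D_i}$ in $H^1(D_i)$, equivalently, after extension by $0$, in $H^1_0(\Omega)$. Since $D_1$ and $D_2$ are disjoint open subsets of $\Omega$, the sum $\varphi_n := \varphi_n^{(1)} + \varphi_n^{(2)}$ belongs to $C^\infty_c(D)$, and its zero extension to $\Omega$ converges in $H^1_0(\Omega)$ to $\tilde{u}\,\chi_{D_1} + \tilde{u}\,\chi_{D_2} = \tilde{u}$, whence $u \in H^1_0(D)$.

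The main technical subtlety of the argument is concentrated in the trace-vanishing claim: it is precisely the fact that the bowtie contact point $\{0\}$ has zero one-dimensional measure on each smooth curve $\partial D_i$ which absorbs the non-Lipschitz feature of $D$ into a null-set matter, and circumvents the need for any delicate cutoff or capacity estimate near $0$ — the sort of estimate one would be forced to produce if the two wings met along an arc of positive length rather than at a single point.
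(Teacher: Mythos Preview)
Your proof is correct and follows essentially the same approach as the paper: both arguments hinge on the observation that each wing $D_i$ is individually Lipschitz, so that $\widetilde{H}^1(D_i) = H^1_0(D_i)$, and then recombine. The only difference is one of detail: the paper simply asserts the decomposition $u = u_1 + u_2$ with $\mathrm{Supp}(u_i) \subset D_i$ and $u_i \in \widetilde{H}^1(D_i)$, whereas you spell out, via a trace argument on $\partial D_i \setminus \{0\}$, exactly why the restriction $u|_{D_i}$ lands in $H^1_0(D_i)$.
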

\begin{proof}
On the one hand, any smooth function in ${\mathcal C}^\infty_c(D)$ can be extended by $0$
to a function in $H^1_0(\Omega)$, so that by density, $H^1_0(D) \subset \widetilde{H}^1(D)$ (this inclusion actually holds true in the case of a general domain $D$).
\par
\medskip
On the other hand, to show the reverse inclusion, let
$u \in \widetilde{H}^1(D)$; given the particular shape of $D$, one may write $u = u_1 + u_2$, for some $u_1,u_2 \in H^1(D)$
with $\textrm{Supp}(u_1) \subset D_1$ and $\textrm{Supp}(u_2) \subset D_2$.
Since $D_1$ is a Lipschitz domain, $u_1 \in \widetilde{H}^1(D_1) = H^1_0(D_1)$ and
$u_1$ arises as the limit in $H^1_0(\Omega)$ of a sequence of functions $u_{1,n} \in {\mathcal C}^\infty_c(D_1)$; hence $u_1 \in H^1_0(D)$. 
Similarly, $u_2 \in H^1_0(D)$, so that $\widetilde{H}^1(D) \subset H^1_0(D)$. 
\end{proof}

The main spectral properties of $T_D$ are described in the following proposition, which is an echo of Proposition \ref{prop_TDd} in the case of the bowtie-shaped domain $D$.
The proof is essentially that of Proposition 3.2 in \cite{BDT}, except 
for a technical point that we make precise.

\begin{proposition} \label{prop_TD}
The operator $T_D$ is bounded, self-adjoint, with operator norm $||T_D||=1$.
Moreover, 
\begin{itemize}
\item[(i)]
Its spectrum $\sigma(T_D)$ is contained in the interval $[0,1]$.

\item[(ii)] The eigenspace associated to the eigenvalue $0$ is:
\begin{eqnarray*}
\Ker(T_D) &=&
\{ u \in H^1_0(\Omega),\: \exists c \in \R, \: u = c \text{ in } D \}.
\end{eqnarray*}

\item[(iii)] The value $1$ belongs to $\sigma(T_D)$ and the associated eigenspace is
\begin{eqnarray*}
\Ker(\Id-T_D) &=&
\{ u \in H^1_0(\Omega), u = 0 \;\textrm{ in }  \Omega \setminus \overline{D} \}; 
\end{eqnarray*}
therefore, in light of Lemma \ref{lem.ht},
$\Ker(\Id - T_D)$ is naturally identified with $H^1_0(D)$.

\item[(iv)]
The space $H^1_0(\Omega)$ decomposes as
\begin{eqnarray*}H^1_0(\Omega) &=&
\Ker(T_D) \oplus \Ker(\Id-T_D) \oplus {\mathcal H}_D,
\end{eqnarray*}
where ${\mathcal H}_D$ is the closed subspace of $H^1_0(\Omega)$ defined by
\begin{eqnarray} \label{eq.defcalHD}
{\mathcal H}_D &=&
\left\{ u \in H^1_0(\Omega), \: \Delta u = 0 \;
\textrm{in}\; D \cup (\Omega \setminus \overline{D}) \text{ and }
\ds\int_{\partial D_1 \cup \partial D_2} \ds\frac{\partial u^+}{\partial \nu} \, ds = 0 \right\}.
\end{eqnarray}
\end{itemize}
\end{proposition}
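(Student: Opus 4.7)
The plan is to follow the proof of Proposition \ref{prop_TDd} from \cite{BDT} as closely as possible, isolating the single step where the non-Lipschitz character of $D$ forces a genuinely new argument. The boundedness, self-adjointness, and the bound $\|T_D\| \le 1$ come directly from the definition (\ref{def_T}) and Cauchy-Schwarz; the inclusion $\sigma(T_D) \subset [0,1]$ follows from the spectral theorem applied to the non-negative self-adjoint operator $T_D$. None of this uses regularity of $\partial D$. For (iii), $T_D u = u$ translates into $\int_{\Omega \setminus \overline{D}} \nabla u \cdot \nabla v \, dx = 0$ for every $v \in H^1_0(\Omega)$; testing with $v = u$ yields $\nabla u = 0$ on $\Omega \setminus \overline{D}$, and connectedness of this open set together with $u = 0$ on $\partial \Omega$ forces $u \equiv 0$ outside $\overline{D}$. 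The identification of $\mathrm{Ker}(\Id - T_D)$ with $H^1_0(D)$ is precisely what Lemma \ref{lem.ht} supplies.

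The genuine new point, and the step I expect to be the main obstacle, appears in (ii): one must show that $u \in H^1_0(\Omega)$ with $\nabla u = 0$ a.e.\ in $D$ takes a \emph{single} common constant on $D$, rather than possibly distinct constants $c_1$ on $D_1$ and $c_2$ on $D_2$ as Proposition \ref{prop_TDd} allows. That $u \equiv c_i$ on each $D_i$ is immediate from connectedness of the wings; the task is to show $c_1 = c_2$, and this is where the contact at the origin enters. To handle it, I work on the upper exterior sector
\begin{equation*}
S = \{(r,\theta) : 0 < r < r_0,\; \alpha/2 < \theta < \pi - \alpha/2\} \subset \Omega \setminus \overline{D}.
\end{equation*}
The trace of $u|_S$ on the ray $\{\theta = \alpha/2\}$ is the constant $c_1$ (inherited from $D_1$) and on $\{\theta = \pi - \alpha/2\}$ it is $c_2$. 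For a.e.\ $r \in (0, r_0)$, the slice $\theta \mapsto u(r,\theta)$ lies in $H^1((\alpha/2, \pi - \alpha/2))$ with these endpoint values, so by Cauchy-Schwarz,
\begin{equation*}
(c_2 - c_1)^2 \;\le\; (\pi - \alpha) \int_{\alpha/2}^{\pi - \alpha/2} |\partial_\theta u(r,\theta)|^2 \, d\theta.
\end{equation*}
Integrating against the polar area element and using $|\nabla u|^2 \ge r^{-2}|\partial_\theta u|^2$ gives
\begin{equation*}
\int_S |\nabla u|^2 \, dx \;\ge\; \frac{(c_2 - c_1)^2}{\pi - \alpha}\int_0^{r_0} \frac{dr}{r},
\end{equation*}
which is infinite unless $c_1 = c_2$, contradicting $u \in H^1_0(\Omega)$. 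This is the familiar logarithmic obstruction: an $H^1$ function cannot accommodate a jump of its boundary values at the tip of a planar corner.

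Once (ii) is sharpened in this way, the decomposition (iv) follows routinely. Harmonicity of $u \in \mathcal{H}_D$ on $D \cup (\Omega \setminus \overline{D})$ comes from orthogonality to $\mathrm{Ker}(\Id - T_D) = H^1_0(D)$ together with test functions in $\mathcal{C}^\infty_c(\Omega \setminus \overline{D})$. The integral condition in (\ref{eq.defcalHD}) is extracted by integrating by parts the orthogonality relation against the generators of $\mathrm{Ker}(T_D)$; since (ii) produces a single one-parameter family of such generators on $D$ rather than two independent ones, one obtains the unified constraint $\int_{\partial D_1 \cup \partial D_2} \partial u^+/\partial \nu \, ds = 0$ in place of the two separate conditions found in Proposition \ref{prop_TDd}.
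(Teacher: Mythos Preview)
Your proof is correct and follows the same overall outline as the paper's. The one place where the arguments genuinely differ is the key step in (ii), forcing $c_1 = c_2$. The paper takes the trace of $u$ on the horizontal line $\ell = \{x_2 = 0\}$, which lies inside $D$ near the origin; since $u \in H^1_0(\Omega)$, this trace is in $H^{1/2}(\ell)$, and an $H^{1/2}$ function on a line cannot jump at a single point, so the two constants agree. You instead work in the \emph{exterior} upper sector $S$ and derive the logarithmic blow-up of $\int_S |\nabla u|^2$ directly from the angular Cauchy--Schwarz inequality. The two arguments are really the same obstruction viewed from opposite sides: your estimate is precisely the computation underlying the fact that a step function fails to be in $H^{1/2}$. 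Your version is more self-contained (no appeal to trace theory), while the paper's is shorter by quoting a standard Sobolev fact. Everything else---(i), (iii), and the decomposition (iv)---matches the paper's treatment essentially verbatim.
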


\begin{proof}
\textit{(i):} It is a straightforward consequence of the self-adjointness of $T_D$ and
of the fact that $\lvert\lvert T_D \lvert\lvert =1$.
\\

\noindent\textit{(ii):} 
By definition, a function $u \in H^1_0(\Omega)$ belongs to $\Ker(T_D)$ if and only if
$$ \forall v \in H^1_0(\Omega), \:\: \int_D{\nabla u \cdot \nabla v \:dx} = 0.$$
Let $u \in \Ker(T_D)$; then $\int_D{\lvert \nabla u \lvert^2 \:dx} = 0$, 
so that $u$ is constant on $D_1$ and on $D_2$: 
there exist $c_1$, $c_2 \in \R$ such that $u = c_i$ on $D_i$, $i=1,2$. Moreover, since $u \in H^1_0(\Omega)$, the trace $u\lvert_\ell$ 
of $u$ on the one-dimensional subset $\ell := \left\{Êx = (x_1,x_2) \in \Omega, \: x_2 = 0 \right\}$ belongs to $H^{\frac{1}{2}}(\ell)$. 
However, by the definition of $u$ and $D$, there exists $r_0 >0$ such that: 
$$ u\lvert_\ell(x) = c_1 \text{ if }Êx =(x_1,0) \text{ with } -r_0 < x_1 < 0 \text{ and } u\lvert_\ell(x) = c_2 \text{ if }Êx =(x_1,0) \text{ with } 0 < x_1 < r_0.$$
This implies that $c_1 = c_2$. 
Conversely, if $u\in H^1_0(\Omega)$ satisfies $u=c$ on $D$ for some $c \in \R$, then $u \in \Ker(T_D)$. \\

\noindent \textit{(iii):} This follows from a similar argument. \\

\noindent \textit{(iv):} A function $u \in H^1_0(\Omega)$ is orthogonal to $\Ker(T_D)$ if and only if
\begin{equation}\label{eq.orthoker}
\forall v \in \Ker(T_D), \:\: \int_\Omega{\nabla u \cdot \nabla v \:dx} = 0.
\end{equation}
Using first test functions $v \in {\mathcal C}^\infty_c(\Omega \setminus \overline{D})$, we obtain that $\Delta u = 0$ in $\Omega \setminus \overline{D}$. 
Now using arbitrary functions $v \in H^1_0(\Omega)$ take a constant value inside $D$, and integrating by parts yields the subsequent condition:
\begin{equation}\label{eq.subscond}
\int_{\partial D_1 \cup \partial D_2}{\frac{\partial u^+}{\partial n} \:ds} = 0.
\end{equation}
Eventually, one proves in a similar way that $u \in H^1_0(\Omega)$ is orthogonal to $\Ker(\Id-T_D)$ if and only if
$ \Delta u = 0 \text{ in } D$.
\end{proof}

\begin{remark}\noindent
\begin{itemize}
\item Rigorously speaking, the definition of the normal derivative $\frac{\partial u^+}{\partial \nu}$
as an element in $H^{-1/2}(\partial D)$ in (\ref{eq.defcalHD}) is not so straightforward in the present context, 
since $D$ fails to be Lipschitz. It is possible to define this notion nevertheless, but
we shall not require this in the present article; for our purpose, we may 
understand (\ref{eq.subscond}) in the sense that (\ref{eq.orthoker}) holds for any function $v \in H^1_0(\Omega)$ such that $v \equiv 1$ on $D$.
\item Interestingly, from the vantage of the eigenspaces of $T_D$, $D$ behaves as if it were a connected domain (compare Proposition \ref{prop_TD} with its the counterpart Proposition \ref{prop_TDd} in the Lipschitz case). This peculiarity highlights one specificity of bowtie-shaped domains.
\end{itemize}
\end{remark}



\section{Corner singularity functions for a bowtie} \label{sec_3}

In this section we characterize the local behavior of solutions to the equation
\begin{equation} \label{eq_cond0}
\textrm{div}(a\nabla u) = 0, \text{ where }Êa(x) \text{ is given by (\ref{def_a}),}
\end{equation} 
in the vicinity of the contact point $x=0$ of the two wings of the bowtie $D$.\par
\medskip
When $k$ takes a positive real value, this question pertains to the theory of elliptic corner singularity, to which a great deal of literature is devoted,
see e.g. \cite{Kondratiev,Grisvard,CostabelDaugeNicaise,Dauge,KozlovMazyaRossmann}.
In a nutshell, for a two-phase transmission problem of the form (\ref{eq_cond0}) featuring a piecewise smooth inclusion with corners, 
$u$ is expected to decompose as the sum of a regular 
and of a singular part $u = u_{\text{\rm reg}} + u_{\text{\rm sing}}$, where $u_{\text{\rm reg}}$ has at least $H^2$ regularity,
whereas $u_{\text{\rm sing}}$ is $H^1$ but not $H^2$ regular.
Moreover, in the neighborhood of a corner, the latter function takes the following form in polar coordinates:
\begin{eqnarray*}
u_{\text{\rm sing}}(r,\theta) &=& C r^\eta \vf(\theta).
\end{eqnarray*}
In this expression, $C$ is a multiplicative constant, $\eta \in (0,1]$ and $\vf$ is a piecewise smooth function;
both $\eta$ and $\vf$ depend on the geometry of the wedge and of the contrast in conductivities.
\medskip

In the present section, we investigate the local behavior of the non trivial solutions to (\ref{eq_cond0}) in the case of a bowtie-shaped domain $D$, when $k$ takes negative values.
More precisely, let the conductivity $a$ be defined by:
\begin{eqnarray*}
a(\theta) &:=& \left\{ \begin{array}{ll}
k & \textrm{if}\; |\theta| < \frac{\a}{2} \;\textrm{or}\; |\pi - \theta| < \frac{\a}{2}
\\
1 & \textrm{otherwise}.
\end{array} \right.
\end{eqnarray*}
We search for a solution to~(\ref{eq_cond0}) in the whole space $\R^2$.
More specifically, we are interested in finding {\em some} solutions to~(\ref{eq_cond0}) in the sense of distributions
which do not belong to the energy space $H^1_{\text{\rm loc}}(\R^2)$. These solutions will be the key ingredient in the construction of generalized
eigenfunctions of $T_D$ carried out in Section~\ref{sec_4}.
Considering the symmetry of the geometric configuration 
with respect to the horizontal axis, it is enough to search for solutions $u$ 
to one of the following two problems set on the upper half-space 
$\Pi^+ := \left\{ x = (x_1,x_2) \in \R^2, \:\: x_2 >0 \right\}$: 
\begin{eqnarray}
\left\{ \begin{array}{ccll}
\textrm{div}(a \nabla u) &=& 0 &\textrm{in}\; \Pi^+
\\
u(x) &=& 0 & \text{on }Ê\partial \Pi^+,
\end{array} \right.
\label{eq_D}
\\
\left\{ \begin{array}{ccll}
\textrm{div}(a \nabla u) &=& 0 &\textrm{in}\; \Pi^+
\\
\frac{\partial u}{\partial n}(x) &=& 0 & \text{on }Ê\partial \Pi^+.
\end{array} \right.
\label{eq_N}
\end{eqnarray}
Indeed, assume that $u_D$ is a solution to (\ref{eq_D}) in the sense of distributions, and 
define 
$$ u(x_1,x_2) = \left\{ 
\begin{array}{cl}
u_D(x_1,x_2) & \text{if } x_2 \geq 0, \\
-u_D(x_1,-x_2) & \text{if } x_2 < 0,
\end{array}
\:\: \text{a.e. } x = (x_1,x_2) \in \R^2.
\right.$$
Then it is easily seen that $u$ is a solution to (\ref{eq_cond0}) in the sense of distributions. Likewise, 
if $u_N$ is a solution to (\ref{eq_N}), then 
$$ u(x_1,x_2) := \left\{ 
\begin{array}{cl}
u_N(x_1,x_2) & \text{if } x_2 \geq 0, \\
u_N(x_1,-x_2) & \text{if } x_2 < 0.Ê
\end{array}
\right.
$$ 
solves~(\ref{eq_cond0}). 

\medskip
Let us first search for a solution to (\ref{eq_D}) under the form 
$u(r,\theta) = r^{i\xi} \vf(\theta)$ for some $\xi >0$ and some function $\varphi(\theta)$ which is $2\pi$-periodic.  
Simple calculations show that (\ref{eq_D}) implies:
$$
(a(\theta) \vf^\prime(\theta))^\prime - \xi^2 a(\theta) \vf(\theta) \;=\; 0
$$
and so $\vf$ has the form:
\begin{eqnarray*}
\vf(\theta) &=& 
\left\{ \begin{array}{ll}
a_1 \cosh(\xi \theta) + b_1 \sinh(\xi \theta) & \quad 0 < \theta <\frac{\a}{2},
\\
a_2 \cosh(\xi \theta) + b_2 \sinh(\xi \theta) & \quad \frac{\a}{2} < \theta < \pi -\frac{\a}{2},
\\
a_3 \cosh(\xi \theta) + b_3 \sinh(\xi \theta) & \quad \pi - \frac{\a}{2} < \theta < \pi,
\end{array} \right.
\end{eqnarray*}
for some constants $a_j$, $b_j$, $j=1,2,3$ to be determined. 
Now expressing the transmission and boundary conditions in (\ref{eq_D}) yields a homogeneous
linear system for the coefficients $a_j,$ $b_j$. 
Existence of a non-trivial solution to (\ref{eq_D}) requires that the determinant of 
this system should vanish. A straightforward calculation shows that the latter determinant is the following polynomial of order $2$ in $k$:
\begin{equation} \label{det_D}
d_D(k) = \cosh^2(\xi \a/2) \sinh[\xi(\pi - \a)] k^2
+ \cosh^2[\xi (\pi - \a)] \sinh(\xi\a) k  + \sinh^2(\xi \a/2) \sinh[\xi(\pi - \a)],
\end{equation}
in which $\xi$ acts as a parameter. The roots of $d_D(k)$ are: 
\begin{equation}
k_{D,+}(\xi) = \ds\frac{ -\left( \cosh[\xi(\pi-\a)] - 1 \right) \sinh(\xi \a)}
{2 \cosh^2(\xi \a/2) \sinh[ \xi(\pi -\a)] } \text{ and }
k_{D,-}(\xi) = \ds\frac{ -\left( \cosh[\xi(\pi-\a)] + 1 \right) \sinh(\xi \a)}
{2 \cosh^2(\xi \a/2) \sinh[ \xi(\pi -\a)] }.
\end{equation}

Likewise, there exists a solution to (\ref{eq_N}) of the form $u = r^{i\xi} \varphi(\theta)$ provided the following determinant vanishes:
\begin{equation} \label{det_N}
d_N(k) = \sinh^2(\xi \a/2) \sinh[\xi(\pi - \a)] k^2
+ \cosh^2[\xi (\pi - \a)] \sinh(\xi\a) k  + \cosh^2(\xi \a/2) \sinh[\xi(\pi - \a)].
\end{equation}
Its roots are:
\begin{equation*}
k_{N,+}(\xi) = \ds\frac{ -\left( \cosh[\xi(\pi-\a)] - 1 \right) \sinh(\xi \a)}
{2 \sinh^2(\xi \a/2) \sinh[ \xi(\pi -\a)] } \text{ and }
k_{N,-}(\xi) = \ds\frac{ -\left( \cosh[\xi(\pi-\a)] + 1 \right) \sinh(\xi \a)}
{2 \sinh^2(\xi \a/2) \sinh[ \xi(\pi -\a)] }.
\end{equation*}
It is easy to check that $k_{D,+}$ is a smooth function on $\R_+$, that $\lim_{\xi \to 0^+} k_{D,+}(\xi) = 0$, while
$\lim_{\xi \to +\infty} k_{D,+}(\xi) = -1$. 
In addition, we may rewrite:
\begin{eqnarray*}
k_{D,+}(\xi) &=&
\left( -\ds\frac{\cosh[\xi(\pi - \a)] -1}{\sinh[\xi(\pi - a)]} \right)
\tanh(\xi \a/2),
\end{eqnarray*}
and check that as functions of $\xi > 0$, the first term in the above right-hand side 
is negative and decreasing, while the second is positive and increasing.
We conclude that $k_{D,+}(\xi)$ is a decreasing function that 
maps $(0, \infty)$ into $(0,-1)$.
\medskip

On the other hand, $k_{N,-}$ is a smooth function on $\R_+$, $\lim_{\xi \to 0^+} k_{N,-}(\xi) = - \infty$, $\lim_{\xi \to +\infty} k_{N,-}(\xi) = -1$, and 
it holds:
\begin{eqnarray*}
k_{N,-}(\xi) &=&
\left( -\ds\frac{\cosh[\xi(\pi - \a)] + 1}{\sinh[\xi(\pi - a)]} \right)
\tanh^{-1}(\xi \a/2),
\end{eqnarray*}
As functions of $\xi > 0$, the first term in the above right-hand side 
is negative and increasing, while the second is positive and decreasing.
It follows that $k_{N,-}$ is a strictly increasing function of $\xi$ that maps
$(0,\infty)$ into $(-\infty,-1)$.
\medskip

Thus, for any $-1 < k < 0$ (resp. $-\infty < k < -1$)
there exists a unique $\xi$ such that $k = k_{D,+}(\xi)$
(resp. $k = k_{N,-}(\xi))$.
We also note that $k_{D,\pm}$ and $k_{N,\pm}$ are even functions of $\xi \in \R$,
so that if $u = r^{i\xi} \vf(\theta)$ is a singular solution,
so is $r^{-i\xi} \vf(\theta)$.

\medskip 
We summarize our findings in a technical lemma:
\begin{lemma}\label{lem.singsol}
For any $k < 0$, $k\neq -1$, there exists $\xi >0$ and a $2\pi$-periodic function $\varphi \in H^1_\#(0,2\pi)$ such that the function $u$ defined by
\begin{equation}\label{eq.formu}
 u(x_1,x_2) = \Re(r^{i\xi} \varphi(\theta)) \in L^\infty(\Omega)
 \end{equation}
is a solution of (\ref{eq_cond0}) in the sense of distributions.
In addition, the function $\varphi$ in (\ref{eq.formu}) solves 
$$ (a(\theta)\vf^{\prime}(\theta))^\prime - \xi^2 a(\theta) \vf(\theta) = 0.$$
\end{lemma}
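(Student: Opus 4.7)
The proof plan essentially consists of assembling the analysis carried out in the paragraphs preceding the statement. The strategy is to split on the sign of $k+1$: for $k \in (-1, 0)$, I would invoke the bijectivity of $k_{D,+} : (0,\infty) \to (-1,0)$ established above, and for $k \in (-\infty,-1)$, I would use the bijectivity of $k_{N,-} : (0,\infty) \to (-\infty,-1)$. In each case, this produces a unique $\xi > 0$ such that the corresponding determinant $d_D$ or $d_N$ vanishes, which yields a nontrivial angular profile on $(0,\pi)$ via the homogeneous linear system for the coefficients $a_j, b_j$.

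Given such a profile $\tilde\varphi$ on $(0,\pi)$ satisfying the appropriate boundary condition at $\theta = 0$ and $\theta = \pi$ (Dirichlet in the first case, Neumann in the second), I would extend it to a $2\pi$-periodic function $\varphi$ on $(-\pi,\pi)$ by setting $\varphi(-\theta) := -\tilde\varphi(\theta)$ in the Dirichlet case and $\varphi(-\theta) := \tilde\varphi(\theta)$ in the Neumann case. The reflection discussion above the ODE derivation shows exactly that this $\varphi$ is the angular part of a distributional solution $u$ of (\ref{eq_cond0}) on all of $\R^2$. The explicit piecewise expression of $\tilde\varphi$ in terms of $\cosh$ and $\sinh$, together with the matching conditions that were used to build $d_D$ and $d_N$, imply continuity of $\varphi$ across $\theta = \pm\alpha/2$ and $\theta = \pm(\pi - \alpha/2)$, hence $\varphi \in H^1_\#(0,2\pi)$. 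The ODE $(a(\theta)\varphi'(\theta))' - \xi^2 a(\theta)\varphi(\theta) = 0$ is built into the construction of $\varphi$ piecewise, and the transmission conditions at the interface angles encode the distributional sense of this equation across them.

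For the bound $u \in L^\infty(\Omega)$, I would simply note that $|r^{i\xi}| = 1$ for real $\xi$, so $|u(x)| \leq \lvert\lvert \varphi \lvert\lvert_{L^\infty}$, and $\varphi$ is bounded since it is piecewise given by linear combinations of $\cosh$ and $\sinh$ on a bounded interval. Verification that $u = \Re(r^{i\xi}\varphi(\theta))$ solves $\div(a\nabla u) = 0$ in the distributional sense on $\Omega$ reduces, after the reflection step, to checking the equation separately on each of the four angular sectors (where it holds by construction of the piecewise ODE solution) and verifying the jump conditions $[u] = 0$ and $[a \partial_\nu u] = 0$ across the radial rays $\theta = \pm\alpha/2$, $\theta = \pm(\pi-\alpha/2)$; these are exactly the conditions used to derive $d_D$, $d_N$.

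The only genuinely delicate point I anticipate is justifying that $u$ is a distributional solution in a \emph{neighborhood of the origin} despite not lying in $H^1_{\text{loc}}$: one must verify that the integration-by-parts identity $\int (a\nabla u)\cdot \nabla \psi\,dx = 0$ holds for all $\psi \in \mathcal{C}^\infty_c(\Omega)$, including those supported across $0$. Since $u \in L^\infty$ and $a\nabla u$ is locally integrable away from $0$ with a singularity of order $r^{-1}$ (integrable in $2$D), one can argue by cutting off a small disk $B_\e$ around the origin, applying Green's formula on each angular sector of $B_R \setminus \overline{B_\e}$, and letting $\e \to 0$; the boundary terms on $\partial B_\e$ are $O(\e \cdot \e^{-1}) = O(1)$ pointwise but their integrals over $\partial B_\e$ scale like $\e \cdot \e^{-1} \cdot \e = \e \to 0$. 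This disposes of the only substantive obstacle.
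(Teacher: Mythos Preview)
Your proposal follows exactly the paper's approach: the lemma is stated as a summary of the preceding construction, and you correctly identify the case split via the bijectivity of $k_{D,+}$ and $k_{N,-}$, the reflection extension, the piecewise $\cosh/\sinh$ form giving $\varphi\in H^1_\#(0,2\pi)$, and the $L^\infty$ bound from $|r^{i\xi}|=1$.

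There is one genuine slip in your final paragraph. After excising $B_\e$ and summing Green's formula over the four sectors, the only surviving boundary term is
\[
\int_{\partial B_\e} a(\theta)\,\partial_r u\,\psi\,\e\,d\theta
\;=\;-\xi\sin(\xi\log\e)\int_0^{2\pi} a(\theta)\varphi(\theta)\,\psi(\e,\theta)\,d\theta,
\]
since $\partial_r u=\Re(i\xi r^{i\xi-1})\varphi(\theta)$. Your scaling ``$\e\cdot\e^{-1}\cdot\e=\e$'' is wrong: the naive bound is (arc length $\sim\e$) $\times$ ($|\partial_r u|\sim\e^{-1}$) $\times$ ($|\psi|=O(1)$) $=O(1)$, and indeed the prefactor $\sin(\xi\log\e)$ oscillates without decaying. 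What actually forces the term to vanish is the orthogonality $\int_0^{2\pi} a(\theta)\varphi(\theta)\,d\theta=0$, obtained by integrating the ODE $(a\varphi')'=\xi^2 a\varphi$ over a period; then writing $\psi(\e,\theta)=\psi(0)+O(\e)$ gives the boundary term as $O(\e)$. This is the same cancellation the paper exploits later (in Step~3 of the proof of Theorem~\ref{th.specbowtie}) when estimating $J_{4,\e}$. With this correction, your argument is complete and in fact more careful than the paper's, which does not explicitly verify the distributional identity across the origin.
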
 

\begin{remark}
\noindent \begin{itemize} 
\item One can check that
$k_{N,-}(\xi) < k_{N,+}(\xi) < -1 < k_{D,-}(\xi) < k_{D,+}(\xi) < 0$ for all $\xi >0$.
\item In the case where $k > 0$, the same procedure yields solutions of~(\ref{eq_cond0})
of the form $u(r,\theta) = r^\xi \vf(\theta)$ for some $0 < \xi < 1$ and $\varphi \in H^1_\#(0,2\pi)$;  
such functions are in $H^1_{\text{\rm loc}}(\R^2) \setminus H^2_{\text{\rm loc}}(\R^2)$.
\end{itemize}
\end{remark}


\section{Characterization of the spectrum of $T_D$} \label{sec_4}

In this section, we now proceed to the identification of the spectrum of $T_D$.
\begin{theorem}\label{th.specbowtie}
The operator $T_D$ has only essential spectrum and
\begin{eqnarray*}
\sigma(T_D) &=& [0,1].
\end{eqnarray*}
\end{theorem}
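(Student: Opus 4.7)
The strategy is to show $\sigma(T_D)=[0,1]$, and then deduce that the essential spectrum equals the full spectrum since $[0,1]$ is a closed interval with no isolated points (so Proposition~\ref{prop_TD}(ii)--(iii) gives $\{0,1\}\subset\sigma_{ess}(T_D)$ automatically, and every $\lambda\in(0,1)$ will turn out to be non-isolated). The inclusion $\sigma(T_D)\subset[0,1]$ is Proposition~\ref{prop_TD}(i), and $\{0,1\}\subset\sigma(T_D)$ is Proposition~\ref{prop_TD}(ii)--(iii). The main work is to exhibit, for each $\lambda\in(0,1)\setminus\{1/2\}$, a Weyl sequence for $T_D$ at $\lambda$; the point $\lambda=1/2$ is then obtained from closedness of $\sigma(T_D)$.

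Given $\lambda\in(0,1)\setminus\{1/2\}$, I set $k=1-1/\lambda$ so that $k<0$, $k\neq-1$, and invoke Lemma~\ref{lem.singsol} to obtain $\xi>0$ and $\varphi\in H^1_\#(0,2\pi)$ with $u^\xi(x)=\Re(r^{i\xi}\varphi(\theta))\in L^\infty$ solving $\mathrm{div}(a\nabla u^\xi)=0$ in $\mathcal{D}'(\R^2\setminus\{0\})$. The Weyl sequence is then built by radial cutoff: pick a smooth $\chi_n(r)$ equal to $1$ on $[2r_n,R_0/2]$ and vanishing outside $[r_n,R_0]$, with $r_n=e^{-t_n}$, $t_n\to\infty$, and set $u_n=\chi_n u^\xi\in H^1_0(\Omega)$. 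The logarithmic structure of $u^\xi$ gives, in the variable $s=\log r$, the bound $\|u_n\|_{H^1_0(\Omega)}^2 \sim (\xi^2\|\varphi\|^2+\|\varphi'\|^2)\,t_n$, while the cutoff transitions near $r_n$ and $R_0$ contribute only $O(1)$.

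The crucial step is to show $\|(T_D-\lambda)u_n\|_{H^1_0(\Omega)}=O(1)$, uniformly in $n$. Starting from the defining variational identity for $T_D$, a direct manipulation that uses $\int a\nabla u^\xi\cdot\nabla(\chi_n v)=0$ (legitimate since $\chi_n v$ is supported away from the origin, where $u^\xi$ is a classical solution) yields
\begin{equation*}
\int_\Omega\nabla(T_D u_n-\lambda u_n)\cdot\nabla v\,dx
\;=\;\lambda\int_\Omega a\,(v\nabla u^\xi-u^\xi\nabla v)\cdot\nabla\chi_n\,dx
\;=\;\lambda\int a\,u^\xi\chi_n'(s)\bigl(i\xi v-\partial_s v\bigr)\,ds\,d\theta.
\end{equation*}
The pivotal cancellation is that integrating the angular equation $(a\varphi')'-\xi^2 a\varphi=0$ over $[0,2\pi]$ and using $2\pi$-periodicity kills $\int (a\varphi')'$, so $\int_0^{2\pi} a(\theta)\varphi(\theta)\,d\theta=0$. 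Splitting $v=\bar v(s)+v_\perp(s,\theta)$ into its angular mean and mean-free part, the $\bar v$-contribution therefore vanishes identically. For $v_\perp$, Poincar\'e on $S^1$ gives $\|v_\perp(s,\cdot)\|_{L^2_\theta}\leq\|\partial_\theta v(s,\cdot)\|_{L^2_\theta}$, and Cauchy--Schwarz in $\theta$ followed by Cauchy--Schwarz in $s$ yields
\begin{equation*}
\Bigl|\int a\,u^\xi\chi_n'(i\xi v-\partial_s v)\,ds\,d\theta\Bigr|
\;\leq\;C(\xi+1)\|a\varphi\|_{L^2(S^1)}\,\|\chi_n'\|_{L^2(ds)}\,\|\nabla v\|_{L^2(ds\,d\theta)}\;\leq\;C,
\end{equation*}
since $\|\chi_n'\|_{L^2(ds)}$ is controlled by the (fixed) transition widths. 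Taking the supremum over $v$ with $\|\nabla v\|_{L^2}=1$ gives $\|(T_D-\lambda)u_n\|_{H^1_0(\Omega)}=O(1)$, hence $\|(T_D-\lambda)u_n\|_{H^1_0}/\|u_n\|_{H^1_0}=O(1/\sqrt{t_n})\to 0$, so after normalization the sequence is a Weyl sequence and $\lambda\in\sigma(T_D)$.

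The hard part is precisely this last estimate: without the identity $\int a\varphi\,d\theta=0$, the ``worst'' radial test function $v=\phi(s)$ would produce an $O(\sqrt{t_n})$ contribution that would exactly match the growth of $\|u_n\|_{H^1_0}$ and prevent Weyl convergence. Once the Weyl sequences are built for $\lambda\in(0,1)\setminus\{1/2\}$, closedness of $\sigma(T_D)$ forces $1/2\in\sigma(T_D)$, giving $\sigma(T_D)=[0,1]$. As no point of $[0,1]$ is isolated in this set, $\sigma_{ess}(T_D)=\sigma(T_D)\setminus\sigma_{disc}(T_D)=[0,1]$, completing the proof.
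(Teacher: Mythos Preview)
Your proof is correct and follows essentially the same strategy as the paper: build Weyl sequences by radially cutting off the singular solutions of Lemma~\ref{lem.singsol}, and exploit the cancellation $\int_0^{2\pi} a(\theta)\varphi(\theta)\,d\theta = 0$ (which both you and the paper identify as the crux) to bound the commutator terms. Your implementation differs only cosmetically---log-polar coordinates and the angular Poincar\'e inequality on $S^1$ in place of the paper's Poincar\'e--Wirtinger inequality on $B_{2\e}$, and the ``no isolated points'' argument in place of directly verifying that the Weyl sequence converges weakly to zero---but the essential ideas are identical.
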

 
\begin{proof}
Using Proposition~\ref{prop_TD} and the fact that $\sigma_{\text{\rm ess}}(T_D)$ is closed, 
it is enough to show that any number $\beta \in (0,1)$, $\beta \neq \frac{1}{2}$ lies in the
essential spectrum of $T_D$.
The proof relies on the same ingredients as that of Theorem~2 in~\cite{BZ} and
we reproduce it for the sake of completeness.
\medskip

\textit{Step 1:} Using the singular solutions $u$ to the transmission problem (\ref{eq_cond0}) (see Lemma \ref{lem.singsol})
calculated in the previous section, we aim at constructing a singular Weyl sequence for the operator $T_D$ and the value $\beta$, namely, 
a sequence of functions $u_\e \in H^1_0(\Omega)$ satisfying the following properties (see Section \ref{sec.weyl}):
\begin{equation} \label{def_singW}
\left\{ \begin{array}{ccll}
||u_\e||_{H^1_0(\Omega)} &=& 1,
\\
(\beta \Id- T_D)u_\e &\to& 0 & \textrm{strongly in}\; H^1_0(\Omega),
\\
u_\e &\to& 0 & \textrm{weakly in}\; H^1_0(\Omega).
\end{array} \right.
\end{equation}
\medskip

To this end, let $\rho < \frac{r_0}{2}$; we introduce two smooth cut-off functions $\chi_1, \chi_2 : \R^+ \rightarrow [0,1]$ 
such that for some constant $C > 0$, the following relations hold:
\begin{equation} \label{def_chi}
 \begin{array}{lll}
\chi_1(s) = 0 \text{ for } |s| \leq 1, & \chi_1(s) = 1  \text{ for } |s| \geq 2, & |\chi_1^\prime(s)| \leq C \text{ for } s \geq 0, \\ 
 \chi_2(s) = 1  \text{ for } |s| \leq \rho,& \chi_2(s) = 0  \text{ for } |s| \geq 2\rho,   &  |\chi_2^\prime(s)| \leq C  \text{ for } s \geq 0.
\end{array}
\end{equation}

For $\e>0$ small enough, we set $\chi_1^\e(r) = \chi_1(\frac{r}{\e})$, and define 
\begin{eqnarray} \label{def_ue}
u_\e(x) &=& s_\e \chi^\e_1(r) \chi_2(r) u(x), \quad x \in \Omega,
\end{eqnarray}
where the normalization constant $s_\varepsilon$ is chosen so that $\lvert\lvert u_\e \lvert\lvert_{H^1_0(\Omega)} = 1$.
\medskip 

\textit{Step 2:} We estimate the constant $s_\e$. To this end, we decompose
\begin{equation}\label{eq.decompnue}
\int_\Omega{\lvert \nabla u_\varepsilon \lvert^2 \:dx} = s_\varepsilon^2 (J_{1,\varepsilon} + m_\e + J_2), 
 \end{equation}
where
$$ J_{1,\e} = \int_{B_{2\e} \setminus \overline{B_\e}}{\lvert\nabla u_\varepsilon \lvert^2 \:dx} = \int_{B_{2\e} \setminus \overline{B_\e}}{\lvert u \nabla \chi_1^\e + \chi_1^\e\nabla u \lvert^2\:dx},$$
$$ m_\varepsilon = \int_{B_\rho \setminus \overline{B_{2\e}}}{\lvert \nabla u_\varepsilon \lvert^2 \:dx} =  \int_{B_\rho \setminus \overline{B_{2\e}}}{\lvert \nabla u \lvert^2 \:dx},$$
and 
$$ J_2 = \int_{B_{2\rho} \setminus \overline{B_{\rho}}}{\lvert \nabla u_\varepsilon \lvert^2 \:dx} =  \int_{B_{2\rho} \setminus \overline{B_{\rho}}}{\lvert \chi_2 \nabla u + u \nabla \chi_2 \lvert^2 \:dx}.$$
Let us first estimate $J_{1,\e}$, using the explicit form (\ref{eq.formu}) for $u$ and a change in polar coordinates:
$$ 
\begin{array}{>{\displaystyle}cc>{\displaystyle}l}
J_{1,\e} &=& \int_{\e}^{2\e}{\int_{0}^{2\pi}{\left(\left\lvert \frac{1}{\varepsilon} \frac{r^{i\xi} + r^{-i\xi}}{2} \varphi(\theta) \chi_1^\prime(\frac{r}{\e}) + i\xi \frac{r^{i\xi} - r^{-i\xi}}{2r} \varphi(\theta) \chi_1(\frac{r}{\e}) \right\lvert^2+\left\lvert \frac{r^{i\xi} + r^{-i\xi}}{2r} \varphi^\prime(\theta) \chi_1(\frac{r}{\e})\right\lvert^2\right) rdr} \:d\theta}\\
&\leq& \frac{C}{\varepsilon^2} \int_{\e}^{2\e}{\int_{0}^{2\pi}{r\: dr}\:d\theta} + C \int_{\e}^{2\e}{\int_{0}^{2\pi}{\frac{1}{r}\: dr}\:d\theta}, \\
&\leq& C.
\end{array}
$$
In the above equation, and throughout the proof, $C$ is a generic constant independent of $\varepsilon$, which may change from one line to the next.\par\medskip

The integral $J_2$ does not depend on $\e$, and since $u$ is smooth on $B_{2\rho} \setminus \overline{B_\rho}$, it is bounded by some constant $C>0$.\par\medskip 

Finally, since $u$ does not belong to $H^1_0(\Omega)$ (recall from (\ref{eq.formu}) that its gradient blows up like $r^{-1}$ as $r \to 0$), it follows that
\begin{equation}\label{eq.metoinfty} 
m_\varepsilon \xrightarrow{\e \to 0} \infty.
\end{equation}
Let us note for further reference that the behavior of $m_\e$ as $\e \to 0$ may be estimated more precisely:
$$ \begin{array}{>{\displaystyle}cc>{\displaystyle}l}
m_\e &=& \int_{2\e}^\rho{\int_{0}^{2\pi}{\left( \xi^2 \left\lvert  \frac{r^{i\xi} - r^{-i\xi}}{2r} \varphi(\theta)\right\lvert^2 + \frac{1}{r^2}\left\lvert  \frac{r^{i\xi} + r^{-i\xi}}{2} \varphi^\prime(\theta)\right\lvert^2 \right)} r dr d\theta},\\
&\leq& C \int_{2\e}^\rho{\int_{0}^{2\pi}{ \frac{1}{r}} dr d\theta},
\end{array}$$ 
and so there exists a constant $C >0$ such that
\begin{equation}\label{eq.estmeps}
m_\e \leq C \lvert \log \e \lvert.
\end{equation}

Recalling (\ref{eq.decompnue}), we obtain
$$ 1 = s_\varepsilon^2 m_\varepsilon (1+ \frac{J_{1,\e} + J_2}{m_\e}), $$
so that there exists a constant $C>0$ such that
\begin{equation}\label{eq.estse}
 \frac{1}{C} m_\e^{-\frac{1}{2}} \leq s_\varepsilon  \leq C m_\e^{-\frac{1}{2}}.
\end{equation}

\textit{Step 3:} We show that $u_\e$ is a Weyl sequence for the operator $T_D$ 
and the value $\beta$. 
To this end, we estimate
$$ \lvert\lvert \beta u_\e - T_D u_\e \lvert\lvert_{H^1_0(\Omega)} = \sup\limits_{v \in H^1_0(\Omega), \atop \lvert\lvert v\lvert\lvert_{H^1_0(\Omega)} =1}{J(v)}, \text{ where }ÊJ(v) := \int_\Omega{\nabla(\beta u_\e - T_D u_\e)\cdot \nabla v \:dx}.$$
Recall from (\ref{eq.eqconducTD}) the alternative expression for $J(v)$
$$
\begin{array}{>{\displaystyle}cc>{\displaystyle}l}
J(v) &=&  \beta \int_{\Omega \setminus \overline{D}}{\nabla u_\e \cdot \nabla v \:dx} + (\beta-1) \int_D{\nabla u_\e \cdot \nabla v \:dx} \\
&=& \beta \int_\Omega{a(x) \nabla u_\e\cdot \nabla v\:dx},
\end{array} $$
with 
$$ a(x) = \left\{Ê
\begin{array}{cl}
1 & \text{if }Êx \in \Omega \setminus \overline{D}, \\
1 - \frac{1}{\beta} & \text{if }Êx \in D.
\end{array}
\right.
$$

Inserting the expression~(\ref{def_ue}) of $u_\e$ in the definition of $J(v)$ yields after elementary calculations:
\begin{eqnarray*}
J(v) &=&
s_\e \beta \ds\int_{\Omega \setminus \overline{D}}{ \nabla u \cdot \nabla (\chi_1^\e \chi_2 v) \:dx}
\;+\; 
s_\e (\beta-1)\ds\int_{D} {\nabla u \cdot \nabla (\chi_1^\e \chi_2 v) \:dx}
\\
&&
\;+\; 
s_\e\beta \ds\int_{\Omega \setminus \overline{D}}{ u \nabla(\chi_1^\e \chi_2) \cdot \nabla v \:dx}
\;+\;
s_\e (\beta -1) \ds\int_{D}{u \nabla(\chi_1^\e\chi_2) \cdot \nabla v \:dx}
\\
&&
\;-\;
s_\e \beta\ds\int_{\Omega \setminus \overline{D}}{ v \nabla u \cdot \nabla(\chi_1^\e\chi_2) \:dx}
\;-\;
s_\e (\beta -1) \ds\int_{D}{v \nabla u \cdot  \nabla(\chi_1^\e\chi_2) \:dx}.
\end{eqnarray*}
Since $u$ satisfies~(\ref{eq_cond0}) and since the test function $\chi_1^\e \chi_2v$ has compact support in $B_\rho \setminus B_\e$, 
the sum of the first two integrals in the right-hand side of the above identity vanishes, so that
\begin{equation}\label{eq.decJv}
 J(v) = \beta s_\e (J_{3,\varepsilon}(v) + J_{4,\varepsilon}(v)),
 \end{equation}
where we have defined: 
\begin{equation}\label{est_J}
 J_{3,\e}(v) = \int_\Omega{au \nabla (\chi_1^\e \chi_2) \cdot \nabla v \:dx} - \int_{B_{2\rho} \setminus \overline{B_\rho}}{av\nabla u \cdot \nabla \chi_2 \:dx},   \text{~ and }ÊJ_{4,\e}(v) = -  \int_{B_{2\e} \setminus \overline{B_\e}}{av\nabla u \cdot \nabla \chi_1^\e \:dx} . 
 \end{equation}
Similar calculations to those involved in the estimate (\ref{eq.estse}) show that
 \begin{equation} \label{est_J3}
|J_{3,\e}(v)| \leq C \, ||v||_{H^1_0(\Omega)}
\left(
\frac{1}{\varepsilon^2}\int_0^{2\e} \ds\int_0^{2\pi}{
|u|^2 |\chi_1^\prime|^2  \,rdrd\theta} +
\int_{\rho}^{2\rho} \ds\int_0^{2\pi}
\left(
|u|^2 |\chi_2^\prime|^2 + |\nabla u|^2 |\chi_2|^2 
\right)\,rdrd\theta
\right),
\end{equation}
and so
\begin{eqnarray}
J_{3,\e}(v) \leq C \lvert\lvert v \lvert\lvert_{H^1_0(\Omega)},
\end{eqnarray}
To estimate the remaining term $J_{4,\e}(v)$, we further decompose
\begin{equation}\label{eq.decompJ4}
J_{4,\e} (v) = \int_{B_{2\e} \setminus \overline{B_\e}}{ 
a \overline{v} \nabla u \cdot \nabla \chi_1^\e \:dx} + \int_{B_{2\e} \setminus \overline{B_\e}}  
{ a (v-\overline{v}) \nabla u \cdot  \nabla \chi_1^\e\:dx},
\end{equation}
where $\overline{v} := \frac{1}{|B_{2\e}|} \int_{B_{2\e}} {v(x) \,dx}$.
The first integral in the above right-hand side reduces to
$$ 
\begin{array}{>{\displaystyle}cc>{\displaystyle}l}
\int_{B_{2\e} \setminus \overline{B_\e}}{ 
a \overline{v} \nabla u \cdot \nabla \chi_1^\e \:dx}  &=&  \frac{\overline{v}}{\e}  \int_{\e}^{2\e}{\int_{0}^{2\pi}{ a(\theta) \chi_1^\prime(\frac{r}{\varepsilon}) \: i\xi  \frac{r^{i\xi} - r^{-i\xi}}{2r} \varphi(\theta) } \:rdrd\theta},\\
&=&  \frac{\overline{v}}{\e}  \int_{0}^{2\pi}{a(\theta) \varphi(\theta) \:d\theta} \: \int_\e^{2\e}{i\xi  \frac{r^{i\xi} - r^{-i\xi}}{2}  \chi^\prime(\frac{r}{\e}) \:dr},  \\
&=& 0,
\end{array}
$$
where we have used the fact that $\varphi \in H^1_\#(0,2\pi)$ is a solution to the equation:
$$(a(\theta)\vf^{\prime}(\theta))^\prime - \xi^2 a(\theta) \vf(\theta) = 0,$$
so that it satisfies $\int_0^{2\pi} a(\theta)  \vf(\theta) \, d \theta = 0$.
Hence, returning to (\ref{eq.decompJ4}), it follows that
\begin{eqnarray*}
|J_{4,\e}(v)| &\leq&
\left(
\ds\int_{B_{2\e} \setminus \overline{B_\e}} a^2 |\nabla u \cdot \nabla \chi_\e|^2 \,dx
\right)^{\frac{1}{2}}
\left(
\ds\int_{B_{2\e}}{ |v - \overline{v}|^2  dx}
\right)^{\frac{1}{2}}.
\end{eqnarray*}
The following Poincar\'e-Wirtinger inequality
$$ \int_{B_{2\e}}{\lvert v - \overline{v} \lvert^2 \:dx} \leq C \varepsilon^2 \int_{B_{2\e}}{\lvert \nabla v \lvert^2 \:dx},$$
where the constant $C$ is independent of $\e$, yields
$$
\begin{array}{>{\displaystyle}cc>{\displaystyle}l}
|J_{4,\e}(v)| &\leq&
C \e ||v||_{H^1_0(\Omega)} 
\left(
\int_{0}^{2\pi} a(\theta)^2 |\vf(\theta)|^2 \,d\theta
\right)^{1/2}
 \quad
\left(\int_{\e}^{2\e}{ \left\lvert  i\xi \ds\frac{r^{i\xi}-r^{-i\xi}}{2r}\right\lvert^2 \frac{1}{\e^2}\chi_1^\prime(\frac{r}{\e})^2
\,r dr} \right)^{1/2} \\
&\leq&
C ||v||_{H^1_0(\Omega)} \left( \ds\int_{\e}^{2\e}{ \frac{dr}{r}} \right)^{1/2}\\
&\leq& C ||v||_{H^1_0(\Omega)}.
\end{array}
$$

We conclude from (\ref{eq.decJv}), (\ref{est_J3}) and the above estimate that
$$ \lvert J(v) \lvert \leq C s_\varepsilon \lvert\lvert v \lvert\lvert_{H^1_0(\Omega)}.$$
Since $s_\e \to 0$ (see (\ref{eq.estse}) and (\ref{eq.metoinfty})), this proves that
$$ \lvert\lvert \beta u_\e - T_D u_\e \lvert\lvert_{H^1_0(\Omega)} \xrightarrow{\e \to 0} 0,$$
and so $u_\e$ is a Weyl sequence for $T_D$ and the value $\beta$.
\medskip

\textit{Step 4:} Finally, we show that $u_\e$ is a \textit{singular} Weyl sequence for $T_D$ 
and the value $\beta$, 
namely that $u_\e \to 0$ weakly in $H^1_0(\Omega)$. Since $u_\e$ has unit norm in $H^1_0(\Omega)$, 
it is enough to prove that $u_\e \to 0$ strongly in $L^2(\Omega)$,
which follows easily from~(\ref{def_ue}), from the boundedness of $\chi_1^\e$, $\chi_2$ and $u$ in $L^\infty(\Omega)$,
and from the fact that $s_\e \to 0$ (viz. (\ref{eq.estse})).

\end{proof}


\section{Comparison with the bowtie with close-to-touching wings} \label{sec_5}

It is interesting to compare the spectral properties of the Poincar\'e variational
operator of $D$ to that of a (Lipschitz) domain $D_\d = D_{1,\d} \cup D_{2,\d}$ 
with only close-to-touching wings. Let us introduce
\begin{eqnarray*}
D_{1,\d} \;=\; (\d/2,0) + D_1,
&\quad &
D_{2,\d} \;=\; (-\d/2,0) + D_2,
\end{eqnarray*}
where the parameter $\d>0$ is sufficiently small so that $D_\d \Subset \Omega$; see Figure \ref{fig.bowtiectt}.

\begin{figure}[!ht]
\centering
\includegraphics[width=0.6\textwidth]{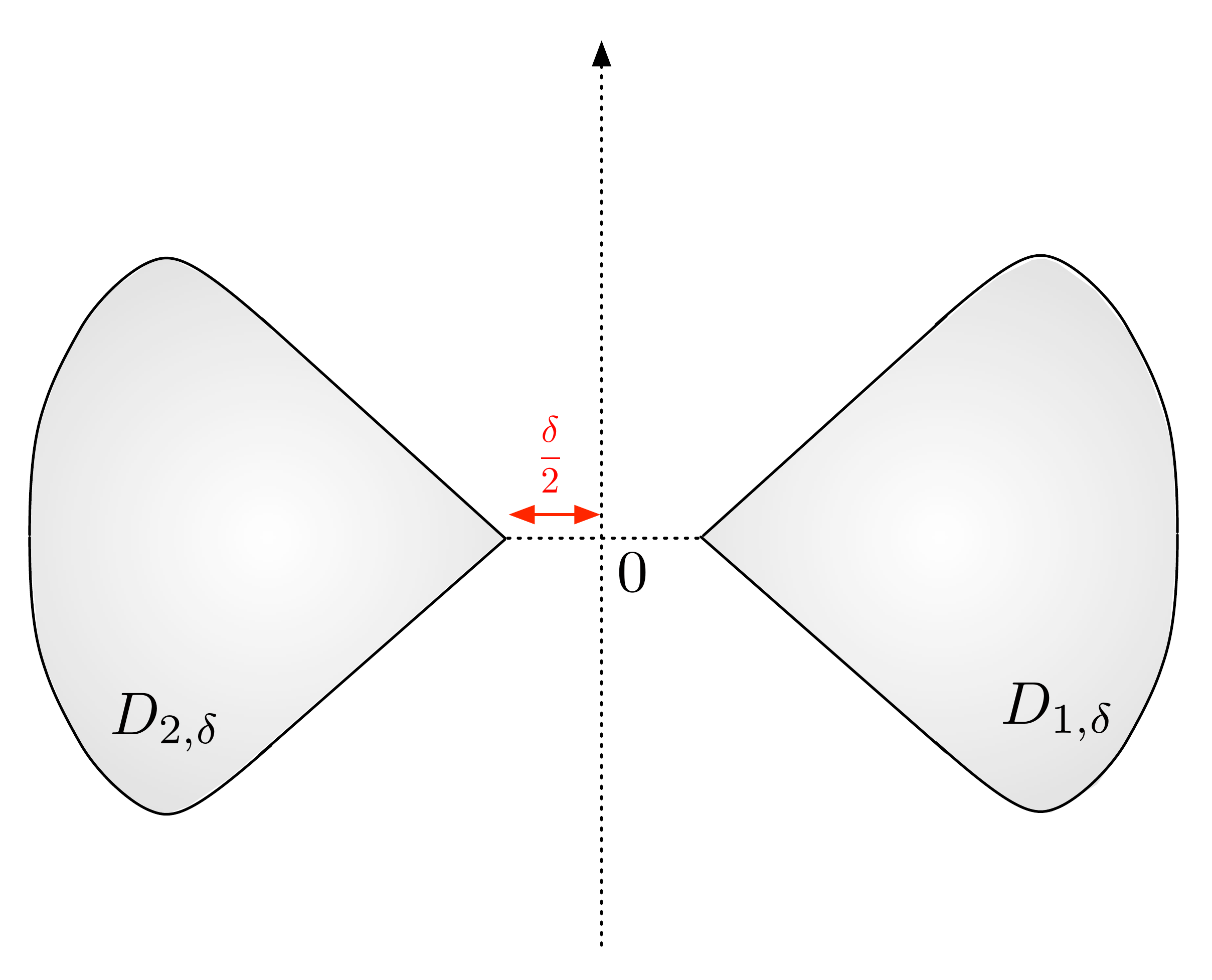}
\caption{\it The bowtie with close-to-touching wings.}
\label{fig.bowtiectt}
\end{figure}

The corresponding Poincar\'e variational operator 
$T_{D_\d}~: H^1_0(\Omega) \longrightarrow H^1_0(\Omega)$ is now defined by
\begin{eqnarray*}
\forall\; v \in H^1_0(\Omega), \quad
\ds\int_\Omega{ \nabla (T_{D_\d}u) \cdot \nabla v \:dx}
&=& \ds\int_{D_\d}{ \nabla u \cdot \nabla v \:dx}.
\end{eqnarray*}

Since $D_\d$ is Lipschitz regular, the study of the spectrum $\sigma(T_{D_\d})$ 
falls into the framework of Sections \ref{sec.sPV0lip} and \ref{sec.NPOlip}, and Proposition \ref{prop_TDd} holds in this case.

More precisely, both domains $D_{1,\delta}$ and $D_{2,\delta}$ have a piecewise smooth 
boundary with a finite number of angles. 
Hence, the results of K.-M. Perfekt and M. Putinar~\cite{PerfektPutinar_2}
apply: the essential spectrum of the associated
Poincar\'e variational operator $T_{D_\delta}$ (and that of the Neumann-Poincar\'e operator ${\mathcal K}_D^*$)
is completely determined by the most acute angle $\alpha$ on the boundary of $D_{1,\delta}$ and $D_{2,\delta}$.
In our context, this takes the form:

\begin{equation*}
\Sess(T_{D_\d}) = \left[\frac{\alpha}{2\pi}, 1-\frac{\alpha}{2\pi}\right]; \:\: \Sess({\mathcal K}^*_{D_\d})  = \left[-\frac{\pi-\alpha}{2\pi}, \frac{\pi-\alpha}{2\pi}\right].
\end{equation*}

Hence, the close-to-touching corners of $D_\d$ are qualitatively less singular than the bowtie feature of $D$, which is associated to an essential spectrum $\sigma(T_D) = [0,1]$. 
A similar phenomenon was already noticed in the article \cite{BV}, investigating
the regularity of solutions to~(\ref{eq_cond}) in the case of the domains $D$ and $D_\d$ for a value $k>0$ of the conductivity. 
In the close-to-touching case, 
the singular part of the solution $u_\d$ to (\ref{eq_cond}) behaves like $r^\eta$ at the vertices, 
with $\eta \geq 2/3$ independently of the value of $k$ and of the angle $\a$.
For the touching case (i.e. in the case of $D$), $u$ behaves also like $r^\eta$ at the contact point,
but $\eta$ can be made as close to $0$ as desired by choosing $k$ sufficiently
close to $0$ or $+\infty$.
\medskip

Our aim is now to shpw that, as $\d \to 0$, the spectrum $\sigma(T_{D_\delta})$
converges to a limiting set which is exactly the spectrum
$\sigma(T_{D}) = [0,1]$ of the limiting physical situation. To this end, we study the \textit{limit spectrum} 
\begin{equation}\label{eq.limspec}
 \lim\limits_{\delta \to 0}{\sigma(T_{D_\d})} := \left\{ \beta \in \mathbb{R}, \:\: \exists \delta_n \downarrow 0, \: \beta_n\in \sigma(T_{D_{\delta_n}}), \:\: \beta_n \to \betaÊ\right\}
 \end{equation}
of the sequence of operators $T_{D_\d}$.

\medskip 
Our analysis relies on the following abstract result for
self-adjoint operators, which is part of the statement of Lemma~(2.8) 
in~\cite{AllaireConca}.

\begin{theorem} \label{thm_AC}
Let $H$ be a Hilbert space and $S_\delta~: H \rightarrow H$ denote a sequence of
self-adjoint operators, with spectrum $\sigma(S_\delta)$. 
Assume that the operators $S_\delta$ converge pointwise
to a limiting operator $S$, with spectrum $\sigma(S)$, in the sense that
\begin{eqnarray} \label{conv_pointwise}
\forall\; u \in H,\quad
\lim_{\delta \to 0} ||S_\delta u - Su || &\rightarrow& 0.
\end{eqnarray}
Then, 
\begin{eqnarray} \label{conv_spectra}
\lim_{\delta \to 0} \sigma(S_\delta) &\supset& \sigma(S),
\end{eqnarray}
where the left-hand set denotes the limit spectrum of the sequence of operators $S_\delta$.
\end{theorem}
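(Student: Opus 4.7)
The plan is to show that for any $\beta \in \sigma(S)$, there exist sequences $\delta_n \downarrow 0$ and $\beta_n \in \sigma(S_{\delta_n})$ with $\beta_n \to \beta$. The cornerstone is the classical distance-to-spectrum identity, valid for any bounded self-adjoint operator $T$ on $H$:
\begin{equation}\label{eq.distspec}
\dist(\beta, \sigma(T)) \;=\; \inf_{u \in H,\, \|u\| = 1} \|(T - \beta \Id) u \|, \qquad \beta \in \R.
\end{equation}
This follows from the spectral theorem, or equivalently from the fact that the resolvent of a normal operator satisfies $\|(T - \beta \Id)^{-1}\| = 1/\dist(\beta, \sigma(T))$ whenever $\beta \notin \sigma(T)$. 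Everything else is a soft transfer of approximate eigenvectors from $S$ to $S_\delta$ via the pointwise convergence.

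Fix $\beta \in \sigma(S)$ and $\e > 0$. By (\ref{eq.distspec}) applied to $S$, there exists a unit vector $u_\e \in H$ with $\|(S - \beta \Id) u_\e\| < \e/2$. Since $u_\e$ is a \emph{fixed} element of $H$, the pointwise convergence hypothesis (\ref{conv_pointwise}) provides some $\delta_\e > 0$ such that $\|(S_\delta - S) u_\e\| < \e/2$ whenever $0 < \delta < \delta_\e$. The triangle inequality then gives $\|(S_\delta - \beta \Id) u_\e\| < \e$, and reapplying (\ref{eq.distspec}), this time to the self-adjoint operator $S_\delta$, yields $\dist(\beta, \sigma(S_\delta)) < \e$ for all such $\delta$.

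A diagonal extraction concludes the proof. Choose $\e_n = 1/n$ and pick inductively $\delta_n$ with $0 < \delta_n < \min(\delta_{\e_n}, \delta_{n-1}/2)$, so that $\delta_n \downarrow 0$ and $\dist(\beta, \sigma(S_{\delta_n})) < 1/n$ for every $n$. Since each $\sigma(S_{\delta_n})$ is a compact subset of $\R$, this infimum is attained at some $\beta_n \in \sigma(S_{\delta_n})$ with $|\beta_n - \beta| < 1/n$. Hence $\beta_n \to \beta$ and, by definition (\ref{eq.limspec}), $\beta$ lies in the limit spectrum, establishing the inclusion $\sigma(S) \subset \lim_{\delta \to 0} \sigma(S_\delta)$.

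I expect no essential obstacle; the only non-routine ingredient is (\ref{eq.distspec}), which is a standard consequence of self-adjointness. The subtle point to watch is that a fixed quasi-eigenvector $u_\e$ must be produced \emph{before} invoking the strong convergence hypothesis, since strong operator convergence is not uniform over the unit ball of $H$ in general; the naive idea of directly pushing a full Weyl sequence for $S$ through the $S_\delta$'s would require a compactness or uniformity argument that is not available here.
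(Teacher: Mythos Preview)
Your argument is correct. The paper does not actually supply a proof of Theorem~\ref{thm_AC}; it simply quotes the statement from Lemma~2.8 of Allaire--Conca and adds a remark that the compactness hypothesis present in that reference is unnecessary for the inclusion~(\ref{conv_spectra}). So there is no in-paper proof to compare against, and your write-up effectively fills that gap.

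Your route---produce a single approximate eigenvector $u_\e$ for $S$ at level~$\e$, freeze it, then push it through the strong convergence hypothesis and invoke the self-adjoint distance-to-spectrum identity~(\ref{eq.distspec}) for each $S_\delta$---is exactly the standard mechanism behind the Allaire--Conca lemma, and your closing comment about why one must fix $u_\e$ \emph{before} taking $\delta$ small (strong operator convergence not being uniform on the unit ball) identifies the only genuine subtlety.

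One cosmetic point: you justify the existence of a minimizer $\beta_n$ by saying that $\sigma(S_{\delta_n})$ is compact. Compactness is not part of the hypotheses; what you actually use is that the spectrum is a closed subset of~$\R$, which already guarantees that the distance from a point is attained. (In the present context the operators are everywhere-defined self-adjoint, hence bounded, so the spectra are in fact compact---but it is cleaner not to rely on this.)
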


\begin{remark}
The statement in \cite{AllaireConca} is more general; in this reference, the result is proved under the additional 
assumption that the operators $S_\delta$ and $S$ are compact, 
but this hypothesis is not necessary for the version presented in Theorem \ref{thm_AC}.
\end{remark}
%
%

We now prove

\begin{proposition}\label{prop.cvstrong}
The operators $T_{D_\delta}$ converge pointwise to $T_D$ as $\delta \to 0$,
in the sense that
$$\forall\; u \in H^1_0(\Omega), \quad
\lim_{\delta \to 0}||T_{D_\delta} u - T_D u||_{H^1_0(\Omega)} = 0.
$$
\end{proposition}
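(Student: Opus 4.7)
The plan is to exploit the fact that $T_{D_\delta} u$ and $T_D u$ are both defined through a Riesz-type representation, so that the difference $w_\delta := T_{D_\delta} u - T_D u$ is characterized by a variational identity whose right-hand side is supported on the symmetric difference $D_\delta \triangle D$.

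First I would subtract the two defining identities (\ref{def_T}) for $T_{D_\delta}$ and $T_D$ to obtain, for every $v \in H^1_0(\Omega)$,
\begin{equation*}
\int_\Omega \nabla w_\delta \cdot \nabla v \, dx
\;=\; \int_{D_\delta \setminus D} \nabla u \cdot \nabla v \, dx
\;-\; \int_{D \setminus D_\delta} \nabla u \cdot \nabla v \, dx.
\end{equation*}
Testing with $v = w_\delta$ and applying the Cauchy-Schwarz inequality on each of the two integrals in the right-hand side yields the key bound
\begin{equation*}
\lvert\lvert w_\delta \lvert\lvert_{H^1_0(\Omega)} \;\leq\; \lvert\lvert \nabla u \lvert\lvert_{L^2(D_\delta \setminus D)} + \lvert\lvert \nabla u \lvert\lvert_{L^2(D \setminus D_\delta)},
\end{equation*}
which reduces the problem to showing that the right-hand side tends to zero as $\delta \to 0$.

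The second step is a purely geometric observation: since $D_{1,\delta}$ and $D_{2,\delta}$ are mere translates of $D_1$ and $D_2$ by vectors of norm $\delta/2$, the symmetric differences $D_{i,\delta} \triangle D_i$ are contained in tubular neighborhoods of $\partial D_i$ whose Lebesgue measure is $O(\delta)$. Moreover, because $D_1$ (resp. $D_2$) lies in the closed right (resp. left) half-plane near $0$, translating outward cannot create new overlaps with the opposite wing, so $D_\delta \triangle D \subset (D_{1,\delta} \triangle D_1) \cup (D_{2,\delta} \triangle D_2)$, and therefore $|D_\delta \triangle D| \to 0$.

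The conclusion then follows from the absolute continuity of the Lebesgue integral applied to the fixed $L^1$ function $|\nabla u|^2 \in L^1(\Omega)$: for any measurable family of sets whose measure vanishes, the integral of $|\nabla u|^2$ over these sets tends to $0$. I do not expect any serious obstacle here—the only delicate point is the geometric check that no spurious overlap appears at the neck when the wings are pulled apart, which is immediate from the explicit description of $D$ in Section \ref{sec.physbt}. Note also that this pointwise convergence is actually uniform on bounded subsets of $H^1_0(\Omega)$ only up to the dependence of the bound on $u$ through its local Dirichlet energy, which is precisely what Theorem \ref{thm_AC} requires.
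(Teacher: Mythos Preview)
Your proof is correct and follows essentially the same route as the paper: both test the defining variational identity for $w_\delta = T_{D_\delta}u - T_D u$ against $w_\delta$ itself, apply Cauchy--Schwarz, and conclude by observing that $\int_{D_\delta \triangle D}|\nabla u|^2\,dx \to 0$ since $|D_\delta \triangle D|\to 0$ and $|\nabla u|^2\in L^1(\Omega)$. The only cosmetic difference is that the paper writes the right-hand side as $\int_\Omega(\mathds{1}_{D_\delta}-\mathds{1}_D)\nabla u\cdot\nabla w_\delta\,dx$ and invokes dominated convergence, whereas you split into the two pieces of the symmetric difference and phrase the last step as absolute continuity of the integral; your version is arguably cleaner in that it avoids the sign ambiguity of $\mathds{1}_{D_\delta}-\mathds{1}_D$ in the Cauchy--Schwarz step.
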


{\bf Proof:}
Fix $u \in H^1_0(\Omega)$ and consider 
\begin{eqnarray*}
||T_{D_\delta} u - T_D u||_{H^1_0(\Omega)}^2
&=&
\ds\int_\Omega |\nabla T_{D_\delta} {u - \nabla T_D u|^2 \:dx}
\\
&=&
\ds\int_{D_\delta} \nabla u \cdot 
\nabla \left( T_{D_\delta} u - T_D u \right) \:dx
-
\ds\int_{D} \nabla u \cdot 
\nabla \left( T_{D_\delta} u - T_D u \right) \:dx
\\
&=& 
\ds\int_\Omega (\mathds{1}_{D_\delta} - \mathds{1}_D) \nabla u \cdot
\nabla \left( T_{D_\delta} u - T_D u \right) \:dx
\\
&\leq&
\left( \ds\int_\Omega (\mathds{1}_{D_\delta} - \mathds{1}_D) |\nabla u|^2  \:dx \right)^{1/2}
||T_{D_\delta} u - T_D u||_{H^1_0(\Omega)}.
\end{eqnarray*}
The Lebesgue Dominated Convergence Theorem shows that the first integral 
on the right-hand side tends to $0$ as $\d \to 0$, which proves the Proposition.
\finproof
\medskip

Combining Proposition \ref{prop.cvstrong}, Theorem~\ref{thm_AC} and the fact that the spectrum of each $T_{D_\d}$ is contained in $[0,1]$ (see Proposition \ref{prop_TDd}), 
we obtain:

\begin{corollary}\label{cor.limspec} 
The limiting spectrum (\ref{eq.limspec}) of the operators $T_{D_\d}$ is exactly that of the Poincar\'e variational operator of the bowtie antenna $D$:
\begin{equation*}
\lim_{\delta \to 0} \sigma(T_{D_\delta}) = \sigma(T_D) = [0,1].
\end{equation*}
\end{corollary}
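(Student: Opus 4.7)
The plan is to combine the three ingredients immediately preceding the corollary: the pointwise (strong) convergence $T_{D_\d} \to T_D$ of Proposition \ref{prop.cvstrong}, the spectral stability result of Theorem \ref{thm_AC}, and the explicit description $\sigma(T_D) = [0,1]$ from Theorem \ref{th.specbowtie}. The proof splits naturally into two inclusions, neither of which requires any new computation.

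First, for the inclusion $\lim_{\d \to 0} \sigma(T_{D_\d}) \supset \sigma(T_D)$, I would apply Theorem \ref{thm_AC} with the Hilbert space $H = H^1_0(\Omega)$, the operator sequence $S_\d = T_{D_\d}$ and limit operator $S = T_D$. The self-adjointness of each $T_{D_\d}$ is guaranteed by Proposition \ref{prop_TDd} (they are Poincar\'e variational operators of Lipschitz domains), and the self-adjointness of $T_D$ comes from Proposition \ref{prop_TD}. The hypothesis (\ref{conv_pointwise}) is exactly the content of Proposition \ref{prop.cvstrong}. Thus Theorem \ref{thm_AC} yields the desired inclusion of the limit spectrum.

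Second, for the reverse inclusion $\lim_{\d \to 0} \sigma(T_{D_\d}) \subset \sigma(T_D)$, I would observe that by Proposition \ref{prop_TDd}(i) each spectrum $\sigma(T_{D_\d})$ is contained in $[0,1]$. Consequently any $\beta \in \lim_{\d \to 0} \sigma(T_{D_\d})$, being a limit of points $\beta_n \in \sigma(T_{D_{\d_n}}) \subset [0,1]$, belongs to the closed interval $[0,1]$, which coincides with $\sigma(T_D)$ by Theorem \ref{th.specbowtie}.

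Combining the two inclusions gives the equality $\lim_{\d \to 0} \sigma(T_{D_\d}) = \sigma(T_D) = [0,1]$. In fact there is essentially no obstacle in this argument: all the difficult work has been packaged into Theorem \ref{th.specbowtie} (construction of singular Weyl sequences for the bowtie) and Proposition \ref{prop.cvstrong} (strong convergence of the variational operators via dominated convergence on the symmetric difference $D_\d \triangle D$). The only point meriting care is a bookkeeping check that Theorem \ref{thm_AC} is applied in a setting consistent with its hypotheses, namely that the convergence in Proposition \ref{prop.cvstrong} is indeed pointwise (strong) on the full space $H^1_0(\Omega)$, which is clear from the statement.
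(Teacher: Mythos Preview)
Your proof is correct and follows exactly the paper's own argument: the paper derives the corollary in one sentence by combining Proposition~\ref{prop.cvstrong}, Theorem~\ref{thm_AC}, and the containment $\sigma(T_{D_\d})\subset[0,1]$ from Proposition~\ref{prop_TDd}. Your write-up simply makes the two inclusions and the verification of hypotheses more explicit.
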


This result deserves a few additionnal comments. 
As we have mentionned, the essential spectrum of $T_{D_\d}$ is exactly the interval $ [\ds\frac{\a}{2\pi}, 1 - \ds\frac{\a}{2\pi}]$
independently of $\delta$, whereas the above corollary shows that in the limit $\delta \to 0$, the spectrum $\sigma(T_{D_\d})$ 
must densify so as to occupy the whole interval $[0,1]$. The only possible way for this to happen is that for $\d$ sufficiently 
small $T_{D_\d}$ must develop eigenvalues in the intervals $[0,\ds\frac{\alpha}{2\pi})$
and $(1-\ds\frac{\a}{2\pi},1]$, which become denser as $\delta \to 0$.
Let us point out that such a densification phenomenon has been observed in different physical contexts; see ~\cite{HelsingKangLim,HelsingMcPhedranMilton} and~\cite{BonnetierDapognyHelsingKang}.


\section{Another approach to the limit spectrum of bowties 
with close-to-touching wings} \label{sec_6}

The purpose of this section is to provide an alternative proof of the fact that 
$\sigma(T_{D_\d})$ contains eigenvalues if the distance
between the wings is sufficiently small.
This fact is indeed contained in Corollary \ref{cor.limspec}, 
but the forthcoming proof is more direct, 
and sheds light on the behavior of the eigenfunctions of $T_{D_\d}$. 
The main result of this section is the following:

\begin{theorem}
For $\d > 0$ small enough, the operator $T_{D_\d}$ has eigenvalues in the range 
$\left(1 - \frac{\a}{2\pi}, 1\right)$ and in the range $\left(0,\frac{\a}{2\pi}\right)$, i.e., 
outside the essential spectrum $\Sess(T_{D_\d})$.
\end{theorem}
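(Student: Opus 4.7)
The plan is to upgrade the singular Weyl sequence construction of Section~\ref{sec_4} into a perturbation argument for $T_{D_\d}$. For each target $\beta \in (1-\alpha/(2\pi),1)$ (and similarly $\beta \in (0,\alpha/(2\pi))$), I will build a unit-norm sequence $u_\varepsilon \in H^1_0(\Omega)$, indexed by a parameter $\varepsilon = \varepsilon(\d)$ tending to $0$ with $\d$, such that $\|(\beta \Id - T_{D_\d})u_\varepsilon\|_{H^1_0(\Omega)} \to 0$. The standard resolvent inequality
\[
\dist(\beta,\sigma(T_{D_\d})) \leq \frac{\|(\beta \Id - T_{D_\d})u_\varepsilon\|_{H^1_0(\Omega)}}{\|u_\varepsilon\|_{H^1_0(\Omega)}}
\]
then forces $\sigma(T_{D_\d})$ to contain a point $\beta_\d$ arbitrarily close to $\beta$ for $\d$ small; since $\beta$ lies outside the essential spectrum $[\alpha/(2\pi),1-\alpha/(2\pi)]$ identified in Section~\ref{sec_5}, so does $\beta_\d$, which is therefore an isolated eigenvalue of finite multiplicity. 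Taking $\d$ small enough also keeps $\beta_\d$ strictly less than $1$, away from the always-present eigenvalue $\Ker(\Id - T_{D_\d}) = H^1_0(D_\d)$.

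For the explicit construction, I reuse verbatim the truncated singular solution
\[
u_\varepsilon(x) = s_\varepsilon\chi_1^\varepsilon(r)\chi_2(r)\Re(r^{i\xi}\vf(\theta))
\]
from the proof of Theorem~\ref{th.specbowtie}, where $\xi$ is chosen, through Lemma~\ref{lem.singsol}, so that $\beta = 1/(1-k)$ with the Dirichlet root $k=k_{D,+}(\xi)\in(-1,0)$ when $\beta\in(1/2,1)$, or the Neumann root $k=k_{N,-}(\xi)\in(-\infty,-1)$ when $\beta\in(0,1/2)$. Setting $a = k\mathds{1}_D + \mathds{1}_{\Omega\setminus\ov{D}}$ and $a_\d = k\mathds{1}_{D_\d} + \mathds{1}_{\Omega\setminus\ov{D_\d}}$, a computation paralleling~(\ref{eq.eqconducTD}) gives, for any $v\in H^1_0(\Omega)$,
\begin{eqnarray*}
\int_\Omega\nabla(\beta u_\varepsilon - T_{D_\d}u_\varepsilon)\cdot\nabla v\,dx
&=& \beta\int_\Omega a\,\nabla u_\varepsilon\cdot\nabla v\,dx + \beta\int_\Omega (a_\d - a)\,\nabla u_\varepsilon\cdot\nabla v\,dx.
\end{eqnarray*}
The first integral is exactly the one estimated in Section~\ref{sec_4}, and is bounded by $C s_\varepsilon\|v\|_{H^1_0(\Omega)}$ with $s_\varepsilon\sim|\log\varepsilon|^{-1/2}$, hence tends to $0$ as $\varepsilon\to 0$.

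The new work lies in controlling the second integral. Since $a_\d-a$ is supported in the symmetric difference $D_\d\triangle D$ and bounded by $|k-1|$, this amounts to estimating $\|\nabla u_\varepsilon\|_{L^2(D_\d\triangle D)}$. On the support of $u_\varepsilon$, contained in $B_{2\rho}\setminus B_\varepsilon$, the translations $D_{i,\d}=D_i\pm(\d/2,0)$ alter each sector by a region of angular width $O(\d/r)$ at radius $r$, while the pointwise estimate $|\nabla u_\varepsilon|^2 \leq C s_\varepsilon^2/r^2$ (coming from $|\nabla(r^{i\xi}\vf)|\leq C/r$) yields
\[
\int_{D_\d\triangle D}|\nabla u_\varepsilon|^2\,dx \leq C s_\varepsilon^2\int_\varepsilon^\rho\frac{\d}{r^2}\,dr \leq C\,\frac{s_\varepsilon^2\d}{\varepsilon}.
\]
Choosing $\varepsilon = \sqrt{\d}$ (or any rate with $\d \ll \varepsilon \to 0$) then drives the second integral to $0$ as well, completing the Weyl-type estimate. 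The main obstacle in this plan is the geometric bookkeeping on $D_\d\triangle D$: one must rigorously verify the angular-width scaling $O(\d/r)$ at all relevant radii (in particular near the apex, where the two translates separate non-uniformly) and handle the cutoff transition zones $B_{2\varepsilon}\setminus B_\varepsilon$ and $B_{2\rho}\setminus B_\rho$, where additional terms involving $\chi_i^\prime$ must be absorbed into the same $O(\d/\varepsilon)$ bound.
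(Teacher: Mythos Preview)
Your argument is correct and takes a genuinely different route from the paper's proof. The paper does not keep the Weyl sequence $u_\e$ fixed and estimate the perturbation $T_{D_\d}-T_D$ on it; instead it \emph{translates} $u_\e$ in each half-plane to build a function $w_\d$ adapted to the shifted geometry $D_\d$, shows that the Rayleigh quotient $\int_{D_\d}|\nabla w_\d|^2/\int_\Omega|\nabla w_\d|^2$ converges to $\beta$, and then projects $w_\d$ orthogonally off $\Ker(\Id-T_{D_\d})$ (resp.\ $\Ker(T_{D_\d})$) to feed the variational characterization~(\ref{eq.varprincip}) of $\beta_\d^\pm$. This requires a separate technical lemma (Lemma~\ref{lem.cvwdud}) controlling $\|u_\d-w_\d\|_{H^1_0(\Omega)}$ and an argument that the projection onto the trivial eigenspace is asymptotically negligible. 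Your approach trades all of this for a single geometric estimate on $D_\d\triangle D$ and the resolvent bound $\dist(\beta,\sigma(T_{D_\d}))\leq\|(\beta\Id-T_{D_\d})u_\e\|$, which is shorter and more in the spirit of the pointwise-convergence result of Section~\ref{sec_5}. What the paper's route buys is a slightly more constructive picture: it exhibits explicit test functions orthogonal to the trivial eigenspaces whose Rayleigh quotients realize values near $\beta$, whereas your argument only asserts abstractly that some spectral point lies nearby. Both conclusions are equivalent for the theorem as stated. One small point worth making explicit in your write-up: the angular-width bound $O(\d/r)$ on $D_\d\triangle D$ is only valid on the sectorial part $r<r_0$; away from the origin the symmetric difference is a strip of width $O(\d)$ along $\partial D$, but since $|\nabla u_\e|$ is uniformly bounded by $Cs_\e$ there, this contributes only $O(s_\e^2\d)$ and is harmless.
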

\begin{proof}
Recalling the orthogonal decomposition (\ref{eq.decHd}), 
let us denote by $\beta_{\d}^-$ and $\beta_\d^+$ the lower and upper bounds of the spectrum of $T_{D_\d}$ deprived of the trivial eigenvalues $0$ and $1$, i.e.
$$ \beta_\d^- = \inf_{\sigma(T_{D_\d}) \setminus \left\{Ê0,1\right\}} \text{ and }  \beta_\d^+ = \sup_{\sigma(T_{D_\d}) \setminus \left\{Ê0,1\right\}}$$ 
Relying on a spectral representation for the operator $T_{D_\d} : {\mathcal H}_{D_\d} \to {\mathcal H}_{D_\d}$ (see e.g. \cite{ReedSimon}), these bounds are given by the Rayleigh quotients:
\begin{equation}\label{eq.varprincip}
 \beta_\d^- = \min\limits_{w \in H^1_0(\Omega) \atop w \perp \Ker(T_{D_\d})}{\frac{\displaystyle{\int_{D_\d}{\lvert \nabla w \lvert^2 \:dx}}  }{\displaystyle{\int_{\Omega}{\lvert \nabla w \lvert^2 \:dx}}Ê}} 
\;\text{ and }\;Ê\beta_\d^+ = \max\limits_{w \in H^1_0(\Omega) \atop w \perp \Ker(\Id-T_{D_\d})}{\frac{\displaystyle{\int_{D_\d}{\lvert \nabla w \lvert^2 \:dx}}  }{\displaystyle{\int_{\Omega}{\lvert \nabla w \lvert^2 \:dx}}Ê}}.
 \end{equation}
\medskip

Let us now pick a value 
$\beta \notin [\ds\frac{\a}{\pi},1- \ds\frac{\a}{\pi}]$, so that $\beta$ lies outside the essential spectrum $\sigma_{\textrm{ess}}(T_{D_\d})$ for any $\d > 0$.
Our aim is to prove that there exists a sequence of functions $Z_\d \in H^1_0(\Omega)$ which is orthogonal to $\Ker(T_{D_\d})$ (resp. to $\Ker(\Id-T_{D_\d})$) such that:
$$\beta =
\lim_{\d \to 0}
\frac{\displaystyle{\int_{D_\d}{ |\nabla Z_\d|^2\:dx}}}
{\displaystyle{\int_\Omega { |\nabla Z_\d|^2\:dx}}}. $$

\medskip

Let $k = 1 - \ds\frac{1}{\beta}$ be the conductivity associated to $\beta$ (see Section \ref{sec.TD}). 
We take on the construction of $u_\e$ carried out in Section \ref{sec_4}: 
let $u$ denote the function supplied by Lemma \ref{lem.singsol}:
\begin{eqnarray} \label{def_u2}
u(x) &=& \Re(r^{i \xi}) \vf(\theta),
\end{eqnarray}
where $\xi$ satisfies 
\begin{eqnarray*}
d_D(\xi) = 0 &\quad \textrm{or}\quad& d_N(\xi) = 0,
\end{eqnarray*}
according to~(\ref{det_D}) and (\ref{det_N}).
\medskip

Let $0 < \rho$ be sufficiently small, and let $\chi_1$, $\chi_2$ be the cut-off functions 
defined as in~(\ref{def_chi}); for $0<\e<\rho$, we define:
\begin{eqnarray*}
u_\e(x) &=& s_\e \chi_1(\frac{r}{\e})\chi_2(r) u(x).
\end{eqnarray*}
As in~(\ref{def_ue}), the normalization constant $s_\e$ is chosen so that 
$||u_\e||_{H^1_0(\Omega)} = 1$.
Recall from (\ref{eq.estmeps}) that there exists a constant $C >0$ such that:
\begin{eqnarray} \label{estim_se2}
s_\e \leq C\ds\frac{1}{|\log(\e)|^{\frac{1}{2}}}.
\end{eqnarray}
The calculations performed in Section~\ref{sec_4} have revealed that the sequence $u_\e$ satisfies
\begin{eqnarray} \label{estim_ue}
\lim_{\e \to 0}||(\beta I - T_D)u_\e||_{H^1_0(\Omega)} &=& 0.
\end{eqnarray}
Recalling (\ref{def_T}), this implies in particular that
\begin{eqnarray} \label{lim_energ_e}
\beta &=& \lim_{\e \to 0}
\frac{\displaystyle{\int_{D}{ |\nabla u_\e|^2 \:dx}}}
{\displaystyle{\int_\Omega { |\nabla u_\e|^2 \:dx}}}
\;=\;  \lim_{\e \to 0}\int_{D}{ |\nabla u_\e|^2 \:dx}.
\end{eqnarray}
\medskip

Let us next turn to the configuration $D_\d$; for a small parameter $\varepsilon >0$ to be specified later, we
define a function $v_{\d,\e}$ by:
\begin{eqnarray} \label{def_vde}
v_{\d,\e}(x_1,x_2) &=&
\left\{ \begin{array}{cl}
u_\e(x_1+\frac{\d}{2},x_2) & \text{if } x_1 < -\frac{\d}{2},
\\
u_\e(x_1-\frac{\d}{2},x_2) &\text{if } x_1 > \frac{\d}{2},
\\
u_\e(0,x_2) & \textrm{otherwise}.
\end{array} \right.
\end{eqnarray}
Note that, by construction, $v_{\d,\e} \in H^1_0(\Omega)$ and:
\begin{eqnarray} \label{estim_vde1}
\ds\int_{D_\d}{ |\nabla v_{\d,\e}|^2 \:dx} &=&
\ds\int_{D} {|\nabla u_\e|^2 \:dx}.
\end{eqnarray}
Additionnally, in view of~(\ref{def_u2}), we have

\begin{equation} \label{estim_vde2}
\begin{array}{>{\displaystyle}cc>{\displaystyle}l}
\int_\Omega { |\nabla v_{\d,\e}|^2 \:dx} &=&
\int_{x_1 < -\frac{\d}{2}} {|\nabla u_\e(x_1 + \frac{\d}{2},x_2)|^2 dx}+ \int_{x_1 >\frac{\d}{2}} {|\nabla u_\e(x_1 -  \frac{\d}{2},x_2)|^2 \:dx} +
\int_{|x_1| < \frac{\d}{2}} {|\partial_{x_2} u_\e(0,x_2)|^2 \:dx} \\ 
&=&
\int_\Omega { |\nabla u_\e|^2 \:dx}
\;+\;
s_\e^2 \int_{|x_1| < \frac{\d}{2}}  {
|\partial_{x_2} \left[
\chi_1(\frac{x_2}{\e}) \chi_2(x_2) u(0,x_2)
\right]|^2 \:dx}.
\end{array}
\end{equation}
We now estimate the last integral in the above expression; to this end,

\begin{equation} \label{estim_vde3}
\begin{array}{>{\displaystyle}cc>{\displaystyle}l}
 \int_{|x_1| < \frac{\d}{2}}  {
\left \lvert \partial_{x_2} (
\chi_1(\frac{x_2}{\e}) \chi_2(x_2) u(0,x_2)
) \right\lvert^2 \:dx} &\leq& \frac{\delta}{\e^2} \int_\e^{2\e}{\left\lvert \chi_1^\prime(\frac{x_2}{\e}) \chi_2(x_2) \cos(\xi\log\lvert x_2\lvert) \right\lvert^2 \:dx_2} \\
&& + \delta \int_\rho^{2\rho}{\left\lvert \chi_1(\frac{x_2}{\e}) \chi_2^\prime(x_2) \cos(\xi\log\lvert x_2\lvert) \right\lvert^2 \:dx_2} \\
&& +\d \int_\e^{2\rho}{ \frac{\xi^2}{x_2^2} \left\lvert \chi_1(\frac{x_2}{\e}) \chi_2(x_2) \sin(\xi\log\lvert x_2\lvert) \right\lvert^2 \:dx_2}.\\
&\leq& C \frac{\d}{\e},
\end{array}
\end{equation}
where the constant $C>0$ is independent of $\d$ and $\e$.
Combining (\ref{estim_vde1}), (\ref{estim_vde2}) and (\ref{estim_vde3}), we find that
\begin{eqnarray*}
\frac{\displaystyle{\int_{D_\d} |\nabla v_{\d,\e}|^2 \:dx}}
{\displaystyle{\int_\Omega {|\nabla v_{\d,\e}|^2 \:dx}}}
&=&
\ds\frac{\displaystyle{\int_{D}{ |\nabla u_\e|^2\:dx}}}
{\displaystyle{\int_\Omega{ |\nabla u_\e|^2 \:dx}} + \frac{s_\e^2 \d}{\e} B_{\e,\d} },
\end{eqnarray*}
where $B_{\e,\d}$ is uniformly bounded with respect to $\e$ and $\d$.
Finally, choosing $\e = \d$ and using (\ref{lim_energ_e}) and~(\ref{estim_se2}), 
it follows that the function $w_\d := v_{\d,\d}$ satisfies
\begin{eqnarray} \label{estim_wd_spec}
\left|
\beta - \ds\frac{\displaystyle{\int_{D_\d} { |\nabla w_\d|^2 \:dx}}}
{\displaystyle{\int_\Omega { |\nabla w_\d|^2 \:dx}}}
\right|
&\leq& \ds\frac{C}{|\log\d|} \to 0,
\;\textrm{as}\; \d \to 0.
\end{eqnarray}\par
\medskip

On a different note, it will be useful for further purpose to notice that $w_\d$ is somehow `close' to $u_\d$. 
More precisely, the following result will come in handy:

\begin{lemma}\label{lem.cvwdud}
The following convergence holds: 
$$ \lvert\lvert u_\d - w_\d \lvert\lvert_{H^1_0(\Omega)} \to 0 \text{ as } \d \to 0.$$
\end{lemma} 
The proof of Lemma \ref{lem.cvwdud} is technical and is postponed to the end of this section.

\medskip

To summarize: we have constructed a series of `test' functions $w_\d \in H^1_0(\Omega)$ 
whose energy ratio converges to the desired value $\beta$ as $\d \to 0$. 
To use these functions in the variational principles (\ref{eq.varprincip}), 
we now construct from $w_\d$ a new series of functions $Z_\d \in H^1_0(\Omega)$ which satisfy the orthogonality conditions $Z_\d \perp \Ker(T_{D_\d})$ or $Z_\d \perp \Ker(\Id-T_{D_\d})$.  
To achieve this, we separate both cases.
\medskip 

\textit{Case 1: $1- \frac{\a}{\pi} < \beta < 1$.}

Let $W_\d$ denote the orthogonal projection of $w_\d$ on $\Ker(\Id - T_{D_\d}) = H^1_0(D_\d)$ and let
$Z_\d = w_\d - W_\d$. 
We also define the function:
\begin{eqnarray*}
U_\d(x) &=& \mathds{1}_{\{x_1 < 0\}}(x) W_\d(x_1 - \frac{\d}{2},x_2) + \mathds{1}_{\{x_1 > 0\}}(x) W_\d(x_1 + \frac{\d}{2},x_2).
\end{eqnarray*}
Obviously, $\lvert\lvert U_\d \lvert\lvert_{H^1_0(\Omega)} = \lvert\lvert W_\d \lvert\lvert_{H^1_0(\Omega)}$. 
Also, since $W_\d \in H^1_0(D_\d)$, there exists a sequence of smooth functions
$(W_{n,\d})_{n \geq 1}$ with compact support inside $D_\d$ such that $W_{n,\d} \to W_\d$ strongly 
in $H^1_0(\Omega)$. It is then easy to check that the functions
\begin{eqnarray*}
U_{n,\d}(x) &:=& \mathds{1}_{\{x_1 < 0\}}(x) W_{n,\d}(x_1 - \frac{\d}{2},x_2) + 
\mathds{1}_{\{x_1 > 0\}}(x) W_{n,\d}(x_1 + \frac{\d}{2},x_2)
\end{eqnarray*}
are smooth with compact support inside $D$ and that they satisfy $U_{n,\d} \to U_\d$ strongly in $H^1_0(\Omega)$.
It follows that $U_\d \in H^1_0(D)$.\par
\medskip

Now, at first using (\ref{estim_vde2}), (\ref{estim_vde3}) and the orthogonality of $W_\d$ and $Z_\d$ yields: 
$$ 
\begin{array}{>{\displaystyle}cc>{\displaystyle}l}
1+ o(1) = \int_\Omega{\lvert \nabla w_\d \lvert^2 \:dx}  &=&  \int_\Omega{\lvert \nabla W_\d \lvert^2 \:dx}  + \int_\Omega{\lvert \nabla Z_\d \lvert^2 \:dx} \\
 &=&  \int_{D_\d}{\lvert \nabla W_\d \lvert^2 \:dx}  + \int_\Omega{\lvert \nabla Z_\d \lvert^2 \:dx}, 
\end{array}
$$
where $o(1) \to 0$ as $\delta \to 0$. Also, from (\ref{estim_wd_spec}), using again (\ref{estim_vde2}) and (\ref{estim_vde3}), we infer:
 $$ 
\begin{array}{>{\displaystyle}cc>{\displaystyle}l}
\beta+ o(1) = \int_{D_\d}{\lvert \nabla w_\d \lvert^2 \:dx}  &=&  \int_{D_\d}{\lvert \nabla W_\d \lvert^2 \:dx}  + \int_{D_\d}{\lvert \nabla Z_\d \lvert^2 \:dx} + 2 \int_{D_\d}{\nabla W_\d \cdot \nabla Z_\d \:dx}\\
 &=&   \int_{D_\d}{\lvert \nabla W_\d \lvert^2 \:dx}  + \int_{D_\d}{\lvert \nabla Z_\d \lvert^2 \:dx}, \\
\end{array}
$$
since 
$$ 
\int_{D_\d}{\nabla W_\d \cdot \nabla Z_\d \:dx} = \int_{\Omega}{\nabla (T_{D_\d} W_\d) \cdot \nabla Z_\d  \:dx} 
= \int_\Omega{\nabla W_\delta \cdot \nabla Z_\d \:dx} = 0.  
$$
Hence, our purpose is now to prove that $\lvert\lvert W_\d \lvert\lvert_{H^1_0(\Omega)} \to 0$ as $\d \to 0$. 

\medskip
To this end, we first observe that, on the one hand, since $W_\d \in \Ker(\Id - T_{D_\d})$, 
\begin{equation} \label{eq.estintWw1}
\begin{array}{>{\displaystyle}cc>{\displaystyle}l}
\int_{\Omega}{\nabla((T_{D_\d} - \beta \Id)w_\d) \cdot \nabla W_\d \:dx} &=& \int_{\Omega}{\nabla w_\d \cdot \nabla ((T_{D_\d} - \beta \Id)W_\d) \:dx}, \\ 
&=& (1-\beta) \int_{\Omega}{\nabla w_\d \cdot \nabla W_\d \:dx} , \\
&=& (1-\beta) \lvert\lvert W_\d \lvert\lvert^2_{H^1_0(\Omega)}.
\end{array}
\end{equation}

On the other hand, recalling~(\ref{def_vde}) with $\e = \d$, a change of variables yields:
\begin{equation}\label{eq.estTDdwW}
\begin{array}{>{\displaystyle}cc>{\displaystyle}l}
\int_\Omega{\nabla(T_{D_\d}w_\d) \cdot \nabla W_\d \:dx} &=&
\ds\int_{D_\d} {\nabla w_\d \cdot \nabla W_\d \:dx}
\\
&=& 
\ds\int_{D} {\nabla u_\d \cdot \nabla U_\d \:dx}
\;=\;  \int_\Omega{\nabla (T_D u_\d) \cdot  \nabla U_\d \:dx },
\end{array}
\end{equation}
and also, since $W_\d$ and $U_\d$ are supported in $D_\d$ and in $D$ respectively,
\begin{eqnarray}\label{eq.estwW}
\int_\Omega{\nabla w_\d \cdot \nabla W_\d \:dx} &=& \ds\int_{D_\d} {\nabla w_\d \cdot \nabla W_\d \:dx}
\;=\; \int_\Omega{\nabla u_\d \cdot \nabla U_\d \:dx }.
\end{eqnarray}

Combining (\ref{eq.estTDdwW}) and (\ref{eq.estwW}) thus implies:
\begin{eqnarray*}
\int_{\Omega}{\nabla((T_{D_\d} - \beta \Id)w_\d) \cdot \nabla W_\d \:dx} 
&=& \int_{\Omega}{\nabla((T_{D} - \beta \Id)u_\d) \cdot \nabla U_\d \:dx}  
\\
&\leq& ||(T_D - \beta \Id)  u_\d||_{H^1_0(\Omega)} ||U_\d||_{H^1_0(\Omega)}
\\
&=& ||(T_D - \beta \Id)  u_\d||_{H^1_0(\Omega)} ||W_\d||_{H^1_0(\Omega)}.
\end{eqnarray*}
Combining this estimate with~(\ref{eq.estintWw1}), and in view of~(\ref{estim_ue}), 
we obtain
\begin{eqnarray*}
(1- \beta) ||W_\d||_{H^1_0(\Omega)} &\leq&
||(T_D  - \beta \Id) u_\d||_{H^1_0(\Omega)} \;=\; o(1)
\quad \textrm{as}\; \d \to 0.
\end{eqnarray*}
Since $\beta \neq 1$, we conclude that $||W_\d||_{H^1_0(\Omega)} \to 0$, as expected. 

\medskip 
This together with (\ref{estim_wd_spec}) finally implies:
\begin{eqnarray*}
\beta = \lim_{\d \to 0}
\frac{\displaystyle{\int_{D_\d}{ |\nabla w_\d|^2\:dx}}}
{\displaystyle{\int_\Omega { |\nabla w_\d|^2\:dx}}}
&=&
\lim_{\d \to 0}
\frac{\displaystyle{\int_{D_\d}{ |\nabla Z_\d|^2\:dx}}}
{\displaystyle{\int_\Omega { |\nabla Z_\d|^2\:dx}}},
\end{eqnarray*}
and so, since $Z_\d \perp \Ker(\Id - T_{D_\d})$:
\begin{eqnarray} \label{estim_d_eigenv_p}
\beta_\d^+ = \max\limits_{
w \in H^1_0(\Omega)
\atop
w \perp \Ker(\Id - T_{D_\d})} 
\frac{\displaystyle{\int_{D_\d}{ |\nabla w|^2\:dx}}}
{\displaystyle{\int_\Omega{ |\nabla w|^2\:dx}}} 
&\geq& \beta + o(1),
\end{eqnarray}
which is the desired result.
\medskip

\textit{Case 2: $0 < \beta < \frac{\a}{\pi}$.}\par\medskip

Recalling~(\ref{estim_wd_spec}), we again decompose~$w_\d = W_\d + Z_\d$,
where $W_\d$ now denotes the orthogonal projection of $w_\d$ on $\Ker(T_{D_\d})$,
so that in particular $\nabla W_\d = 0$ inside $D_\d$.
Again, our aim is to prove that $W_\d \to 0$ strongly in $H^1_0(\Omega)$ as $\d \to 0$. 
\medskip

This follows from the chain of inequalities:
\begin{equation*}
\begin{array}{>{\displaystyle}cc>{\displaystyle}l}
||W_\d||^2_{H^1_0(\Omega)} 
&=&  \ds\int_\Omega {\nabla W_\d \cdot \nabla(w_\d - Z_\d)  \:dx}
\\
&=& \ds\int_\Omega {\nabla W_\d \cdot \nabla w_\d \:dx}
\\
&=&
\ds\frac{1}{\beta}\,\int_\Omega { \nabla ((\beta I - T_{D_\d})u_\d) \cdot \nabla W_\d \:dx}
+ \int_{\Omega}{\nabla(w_\d - u_\d) \cdot \nabla W_\d \:dx} +  \ds\frac{1}{\beta}\,\int_\Omega { \nabla T_{D_\d} u_\d \cdot \nabla W_\d \:dx}
\\
&\leq& 
\ds\frac{1}{\beta}\, ||(\beta I - T_{D_\d})u_\d||_{H^1_0(\Omega)} ||W_\d||_{H^1_0(\Omega)} + \lvert\lvert u_\d - w_\d \lvert\lvert_{H^1_0(\Omega)} \lvert\lvert W_\d \lvert\lvert_{H^1_0(\Omega)}
+ \ds\frac{1}{\beta} \left| \ds\int_{D_\d}{ \nabla w_\d \cdot \nabla W_\d  \:dx} \right|,
\end{array}
\end{equation*}
and so: 
$$ \lvert\lvert W_\d \lvert\lvert_{H^1_0(\Omega)} \leq \ds\frac{1}{\beta}\, ||(\beta I - T_{D_\d})u_\d||_{H^1_0(\Omega)}  + \lvert\lvert u_\d - w_\d \lvert\lvert_{H^1_0(\Omega)}.$$
It thus follows from~(\ref{estim_ue}) and Lemma \ref{lem.cvwdud} that $||W_\d||_{H^1_0(\Omega)} \to 0$, 
so that
\begin{eqnarray*}
\lim_{\d \to 0}
\ds\frac{\displaystyle{\int_{D_\d} {|\nabla w_\d|^2 \:dx}}}
{\displaystyle{\int_\Omega {|\nabla w_\d|^2 \:dx}}}
&=&
\lim_{\d \to 0}
\ds\frac{\displaystyle{\int_{D_\d}{ |\nabla Z_\d|^2\:dx}}}
{\displaystyle{s\int_\Omega{ |\nabla Z_\d|^2\:dx}}}
\;=\; \beta,
\end{eqnarray*}
which yields, since $Z_\d \perp \Ker(T_{D_\d})$,  
\begin{eqnarray} \label{estim_d_eigenv_m}
\min_{
w \in H^1_0(\Omega)
\atop
w \perp \Ker(T_{D_\d})}
\frac{\displaystyle{\int_{D_\d}{ |\nabla w|^2 \:dx}}}
{\displaystyle{\int_\Omega{ |\nabla w|^2\:dx}}} &\leq& \beta + o(1).
\end{eqnarray}
\medskip

We conclude from~(\ref{estim_d_eigenv_p}) and~(\ref{estim_d_eigenv_m})
that for $\d > 0$ small enough, $T_{D_\d}$ necessarily has eigenvalues in the range 
$[1 - \frac{\a}{\pi}, 1)$ and in the range $(0,\frac{\a}{\pi})$, i.e., 
outside the essential spectrum.
\end{proof}
 
We eventually prove the missing link in the above discussion.

\begin{proof}[Proof of Lemma \ref{lem.cvwdud}]
By definition, $u_\d$ has compact support inside $B_{2\rho}$, while $w_\d$ has compact support in the stadium
$$ S_\d := B_{2\rho}(-\frac{\delta}{2},0) \cup  L_\d \cup B_{2\rho}(\frac{\delta}{2},0), \text{ where }ÊL_\d := \left\{ x= (x_1,x_2) \in \Omega, \:\: \lvert x_1 \lvert < \frac{\d}{2}, \: \lvert x_2 \lvert < 2_\rho \right\}.$$
Denote
$$H_\delta^- = \left\{ x\in B_{2\rho} \setminus \overline{L_\d}, \:\: x_1 < 0 \right\}, \text{ and } H_\delta^+ = \left\{ x\in B_{2\rho} \setminus \overline{L_\d}, \:\: x_1 >  0 \right\}.$$ 
Using that $\lvert S_\d \setminus \overline{B_{2\rho}} \lvert \to 0$ as $\d \to 0$, and the uniform boundedness of $u_\d$ and $w_\d$ `far' from $0$, one has first: 
$$ 
\begin{array}{>{\displaystyle}cc>{\displaystyle}l}
 \lvert\lvert u_\d - w_\d \lvert\lvert_{H^1_0(\Omega)}^2  &=& \int_{S_\d}{ \lvert \nabla u_\d - \nabla w_\d \lvert^2 \:dx}, \\
 & =& \int_{L_\d}{ \lvert \nabla u_\d - \nabla w_\d \lvert^2 \:dx} +  \int_{B_{2\rho} \setminus \overline{L_\d}}{ \lvert \nabla u_\d - \nabla w_\d \lvert^2 \:dx} + o(1),\\
 &=:&  I_\d^- + I_\d^+ + I_\d^L + o(1),
\end{array} 
$$
where we have introduced the following three integrals (recalling the definition (\ref{def_vde}) of $w_\d$):
$$ I_\d^- :=  \int_{H_\delta^-}{ \lvert \nabla u_\d(x_1,x_2) - \nabla u_\d(x_1+\frac{\d}{2},x_2) \lvert^2 \:dx} , \:\: I_\d^+ :=  \int_{ H_\delta^+}{\lvert \nabla u_\d(x_1,x_2) - \nabla u_\d(x_1-\frac{\d}{2},x_2) \lvert^2 \:dx},$$
$$ I_\d^L := \int_{L_\d}{ \lvert \nabla u_\d(x_1,x_2) - \nabla u_\d(0,x_2) \lvert^2 \:dx},$$
We now prove that $I_\d^-$, $I_\d^+$ and $I_\d^L$ vanish as $\d \to 0$. \par
\medskip
$\bullet$ \textit{Proof of the convergence $I_\d^- \to 0$:}
A simple calculation yields: 
$$ 
\begin{array}{>{\displaystyle}cc>{\displaystyle}l}
I_\d^- &=&  \int_{H_\delta^- \cap B_{3\d}}{ \lvert \nabla u_\d(x_1,x_2) - \nabla u_\d(x_1+\frac{\d}{2},x_2) \lvert^2 \:dx} + \int_{H_\delta^- \setminus B_{3\d}}{ \lvert \nabla u_\d(x_1,x_2) - \nabla u_\d(x_1+\frac{\d}{2},x_2) \lvert^2 \:dx} , \\
&=:& J_\d^1 + J_\d^2.
\end{array}
$$
At first, for $x \in B_{3\d}$, one has $u_\d(x) = s_\d \chi_1(\frac{r}{\d}) u(x)$, and so:
\begin{equation}\label{eq.dudxiB3d}
 \frac{\partial u_\delta}{\partial x_i}(x_1,x_2) = s_\d \left( \frac{1}{\d}\frac{x_i}{r} \chi_1^\prime(\frac{r}{\d}) u(x) +  \frac{i\xi x_i}{2r^2} \chi_1(\frac{r}{\d}) (r^{i\xi} - r^{-i\xi}) \varphi(\theta)\right), \:\: i=1,2.
 \end{equation}
Now using Taylor's formula yields: 
$$ 
\begin{array}{>{\displaystyle}cc>{\displaystyle}l}
J_\d^1 & \leq & C\d^2 \left( \int_{H_\delta^- \cap B_{3\d}}{\int_0^1{ \left\lvert \frac{\partial^2 u_\delta }{\partial x_1^2} (x_1+t\frac{\d}{2},x_2)  \right\lvert^2 \:dt} \:dx} +  \int_{H_\delta^- \cap B_{3\d}}{\int_0^1{ \left\lvert \frac{\partial^2 u_\d }{\partial x_1\partial x_2}(x_1+t\frac{\d}{2},x_2)  \right\lvert^2 \:dt} \:dx} \right) , \\
&\leq&  C \d^2s_\d ^2 \int_{H_\delta^- \cap B_{3\d}}{\int_0^1{ \left( \frac{1}{\delta^4} + \frac{1}{\delta^2 r_t^2} + \frac{1}{r_t^4}\right) (\lvert \chi_1(\frac{r_t}{\d})\lvert^2 + \lvert \chi_1^\prime(\frac{r_t}{\d})\lvert^2 + \lvert \chi_1^{\prime\prime}(\frac{r_t}{\d})\lvert^2)   \:dt} \:dx},
\end{array}$$

where we have denoted by $(r_t, \theta_t)$ the polar representation of the point with Cartesian coordinates 
$(x_1+t\frac{\d}{2},x_2)$. Using that $\chi_1(\frac{r_t}{\d})$ vanishes for $r_t \leq \d$, it follows: 
$$ 
J_\d^1  \leq  C \d^2s_\delta^2 \int_{H_\delta^- \cap B_{3\d}}{\frac{1}{\d^4}\:dx},
$$
and so $J_\d^1$ converges to $0$ as $\d \to 0$, owing to the estimate (\ref{estim_se2}) on $s_\d$.

\medskip 
Let us now deal with the integral $J_\d^2$. Using the same calculation as above yields: 
$$ 
J_\d^2  \leq C\d^2 \left( \int_{H_\delta^- \setminus\overline{ B_{3\d}}Ê}{\int_0^1{ \left\lvert \frac{\partial^2 u_\delta }{\partial x_1^2} (x_1+t\frac{\d}{2},x_2)  \right\lvert^2 \:dt} \:dx} +  \int_{H_\delta^- \setminus \overline{B_{3\d}} }{\int_0^1{ \left\lvert \frac{\partial^2 u_\d }{\partial x_1\partial x_2}(x_1+t\frac{\d}{2},x_2)  \right\lvert^2 \:dt} \:dx} \right) , 
$$
and since for $x \in \Omega \setminus \overline{B_{3\d}}$, one has $u_\d(x) = s_\d u(x) \chi_2(r)$, it follows: 
\begin{equation}\label{eq.dudxiextB3d}
 \frac{\partial u_\d}{\partial x_i}(x) = s_\d \frac{i\xi x_i}{2r^2}(r^{i\xi}-r^{-i\xi}) \varphi(\theta) \chi_2(r) + \frac{x_i}{r}\chi_2^\prime(r) u(x), \text{ for }Êx \in \Omega \setminus \overline{B_{3\d}},
 \end{equation}
so that: 
$$ J_\d^2 \leq C \delta^2 s_\d ^2 \int_{B_{2\rho} \setminus \overline{B_{3\d}}}{\int_0^1 \left\lvert \frac{1}{r_t^2} \right\lvert^2 \:dx},$$
where, again, $(r_t,\theta_t)$ are the polar coordinates of $(x_1+t\frac{\d}{2},x_2)$.
We now remark that, by an elementary calculation: 
$$ r_t^2 \geq \frac{r^2}{2} - \frac{\delta^2}{2},$$
so that, switching to polar coordinates: 
$$ J_\d^2 \leq C\delta^2 s_\delta^2 \int_{3\d} ^{2\rho}{\frac{r dr}{(r^2-\delta^2)^2}} \leq C s_\delta^2,$$
whence $J_\d^2 \to 0$. This completes the proof of that fact that $I_\d^- \to 0$ as $\d \to 0$.
\medskip 

$\bullet$ The proof that $I_\d^+ \to 0$ is completely similar. 
\par\medskip

$\bullet$ \textit{Proof of the convergence $I_\d^L \to 0$:} 
Using a similar decomposition as in the case for $I_\d^-$, we get: 
$$ \begin{array}{>{\displaystyle}cc>{\displaystyle}l}
I_\d^L &=& \int_{L_\d \cap B_{3\d}}{ \lvert \nabla u_\d(x_1,x_2) - \nabla u_\d(0,x_2) \lvert^2 \:dx}  +  \int_{L_\d \setminus \overline{B_{3\d}}}{ \lvert \nabla u_\d(x_1,x_2) - \nabla u_\d(0,x_2) \lvert^2 \:dx}, \\
&=: & K_\d^1 + K_\d^2.
\end{array} $$
Using the expression (\ref{eq.dudxiB3d}) for the gradient of $u_\d$ inside $B_{3\d}$, it comes:
$$ \begin{array}{>{\displaystyle}cc>{\displaystyle}l}
K_\d^1 &=& \int_{L_\d \cap B_{3\d}}{ \left\lvert \int_{0}^1{\frac{\partial^2 u_\d}{\partial x_1^2}(t x_1,x_2) x_1 \:dt} \right \lvert^2 \:dx}  + \int_{L_\d \cap B_{3\d}}{ \left\lvert \int_{0}^1{\frac{\partial^2 u_\d}{\partial x_1 \partial x_2}(t x_1,x_2) x_1 \:dt} \right \lvert^2 \:dx} \\
&\leq&  C s_\d ^2 \int_{H_\delta^- \cap B_{3\d}}{\int_0^1{ \left( \frac{1}{\delta^4} + \frac{1}{\delta^2 r_t^2} + \frac{1}{r_t^4}\right) (\lvert \chi_1(\frac{r_t}{\d})\lvert^2 + \lvert \chi_1^\prime(\frac{r_t}{\d})\lvert^2 + \lvert \chi_1^{\prime\prime}(\frac{r_t}{\d})\lvert^2)  \lvert x_1 \lvert^2  \:dt} \:dx},
\end{array} $$
where we have now denoted by $(r_t,\theta_t)$ the polar coordinates of $(tx_1,x_2)$.
Since $\chi_1(\frac{r_t}{\delta})$, $\chi_1^\prime(\frac{r_t}{\delta})$ and $\chi_1^{\prime\prime}(\frac{r_t}{\delta})$ vanish identically for $r_t \leq \delta$, we obtain: 
$$K_\d^1 \leq Cs_\d^2 \int_{L_\d \cap B_{3\d}}{\frac{\lvert x_1\lvert^2}{\delta^4}\:dx},$$
and it follows as previously that $K_\d^1 \to 0$ as $\d \to 0$. Likewise, using (\ref{eq.dudxiextB3d}), we get: 
$$ \begin{array}{>{\displaystyle}cc>{\displaystyle}l}
K_\d^2 &=& \int_{L_\d \setminus \overline{B_{3\d}} }{ \left\lvert \int_{0}^1{\frac{\partial^2 u_\d}{\partial x_1^2}(t x_1,x_2) x_1 \:dt} \right \lvert^2 \:dx}  + \int_{L_\d \setminus \overline{B_{3\d}} }{ \left\lvert \int_{0}^1{\frac{\partial^2 u_\d}{\partial x_1 \partial x_2}(t x_1,x_2) x_1 \:dt} \right \lvert^2 \:dx} \\
&\leq&  C s_\d ^2 \int_{L_\d \setminus \overline{B_{3\d}} }{ \int_0^1{\frac{1}{\lvert r_t \lvert^4} \lvert x_1\lvert^2\:dt} \:dx},
\end{array}
$$
where $(r_t,\theta_t)$ are the polar coordinates of $(tx_1,x_2)$. We now use the fact that, for $x \in L_\d \setminus \overline{B_{3\d}}$ and $t \in (0,1)$, 
$$ \begin{array}{ccl}
r_t^2 &=& t^2x_1^2 + x_2^2 , \\ 
&=& r^2 + (t^2 - 1) x_1^2, \\
&\geq & r^2 - \frac{\delta^2}{4}. 
\end{array}$$
Hence, switching to polar coordinates, 
$$ \begin{array}{>{\displaystyle}cc>{\displaystyle}l}
K_\d^2 &\leq& Cs_\d^2  \int_{L_\d \setminus \overline{B_{3\d}}}{\frac{\lvert x_1\lvert^2}{(r^2 - \delta^2)^2}\:dx}, \\
&\leq& C \d^2 s_\d^2  \int_{L_\d \setminus \overline{B_{3\d}}}{\frac{1}{(r^2 - \delta^2)^2}\:dx}, \\
&\leq& C \d^2 s_\d^2  \int_{3\d}^{2\rho}{\frac{r}{(r^2 - \delta^2)^2}\:dr}, \\
&\leq& Cs_\d^2, 
\end{array}
$$
which completes the proof of the fact that $K_\d^2 \to 0$ as $\d \to 0$, and so that $I_\d^L \to 0$.\par
\medskip
Putting things together, we have proved that $\lvert\lvert u_\d - w_\d \lvert\lvert_{H^1_0(\Omega)}^2 =  I_\d^- + I_\d^+ + I_\d^L + o(1)$ 
converges to $0$ as $\d \to 0$, which is the expected conclusion.
\end{proof}


\noindent \textbf{Acknowledgements.} 
Hai Zhang was partially supported by Hong Kong RGC grant ECS 26301016 and 
startup fund R9355 from HKUST.
E. Bonnetier, C. Dapogny and F. Triki were partially supported by the AGIR-HOMONIM grant from 
Universit\'e Grenoble-Alpes, and by the Labex PERSYVAL-Lab (ANR-11-LABX-0025-01).
This project was conducted while E.B. was visiting the Institute
of Mathematics and its Applications at the University of Minnesota,
the hospitality and support of which is gratefully acknowledged.


\appendix
\section{The spectrum of an operator and the Weyl criterion}\label{sec.weyl}

For the reader's convenience, we recall in this appendix the Weyl criterion, one of the main tools used in the present article; 
see for instance \cite{ReedSimon}, Chap. VII or \cite{BirmanSolomjak} for a more complete presentation.
\medskip

Let $T: H \to H$ be a bounded self-adjoint operator on a Hilbert space $H$. 
As is well-known, the spectrum $\sigma(T)$ of $T$ is the set of real numbers $\lambda$ such that $(\lambda \Id - T)$ does not have a bounded inverse.
The \textit{discrete spectrum} $\sigma_{\text{\rm disc}}(T)$ of $T$ is the subset of the $\lambda \in \sigma(T)$ such that both the following conditions hold: 
\begin{enumerate}[(i)]
\item $\lambda$ is isolated in $\sigma(T)$, i.e. there exists $\varepsilon >0$ such that $\sigma(T) \cap (\lambda-\varepsilon, \lambda + \varepsilon) = \left\{ \lambda \right\}$, 
\item $\lambda$ is an eigenvalue of $T$ with finite multiplicity. 
\end{enumerate}
The complement of $\sigma_{\text{\rm disc}}(T)$ in $\sigma(T)$ is a closed set called the \textit{essential spectrum} of $T$
and is denoted by $\sigma_{\text{\rm ess}}(T)$.
\medskip

The Weyl criterion offers a convenient characterization of the spectrum and essential spectrum in terms of \textit{Weyl sequences}:

\begin{theorem}\label{th.weyl}
Let $T : H \to H$ be a bounded, self-adjoint operator on a Hilbert space $H$. Then, 
\begin{itemize}
\item A number $\lambda \in \R$ belongs to the spectrum $\sigma(T)$ if and only if there exists a sequence $u_n \in H$ such that: 
$$ \lvert\lvert u_n \lvert\lvert = 1  \text{ and } \lvert\lvert \lambda u_n - T u_n \lvert\lvert \xrightarrow{n\to \infty}Ê0.$$ 
Such a sequence is called a Weyl sequence for $T$ associated to the value $\lambda$. 
\item $\lambda \in \R$ belongs to the essential spectrum $\sigma_{\text{\rm ess}}(T)$ if and only if there exists a Weyl sequence $u_n$ for $\lambda$ such that $u_n \to 0$ weakly in $H$; such a sequence is called a singular Weyl sequence for $T$ and $\lambda$.
\end{itemize}
\end{theorem}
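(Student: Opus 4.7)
\textbf{Proposal for the proof of Theorem \ref{th.weyl} (Weyl criterion).} The plan is to handle the two bullets in turn, each by a pair of short arguments (one ``easy'' direction by direct contradiction, one ``hard'' direction via the spectral theorem for the self-adjoint operator $T$). Throughout, set $A_\lambda := \lambda \Id - T$, which is itself bounded and self-adjoint.

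For the first bullet (characterization of $\sigma(T)$), the reverse implication is almost immediate: if $A_\lambda$ had a bounded inverse $B$, then from $u_n = B A_\lambda u_n$ and $\|u_n\|=1$ we would get $1 \le \|B\| \,\|A_\lambda u_n\| \to 0$, a contradiction. For the forward implication I would exploit self-adjointness to rule out all cases except the existence of a Weyl sequence. Concretely, I would show that if $c := \inf_{\|u\|=1}\|A_\lambda u\| > 0$, then $A_\lambda$ is injective with closed range, and since $T$ (hence $A_\lambda$) is self-adjoint, one has $\overline{\mathrm{Range}(A_\lambda)} = \ker(A_\lambda^*)^{\perp} = \ker(A_\lambda)^{\perp} = H$; thus $A_\lambda$ is bijective and, by the open mapping theorem, boundedly invertible, contradicting $\lambda \in \sigma(T)$. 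Consequently $c=0$, and the infimum is realized by an approximate minimizing sequence, which is exactly a Weyl sequence.

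For the second bullet (characterization of $\sigma_{\mathrm{ess}}(T)$), I would invoke the spectral theorem for bounded self-adjoint operators, which provides a projection-valued measure $E$ on $\sigma(T)$ with $T = \int t \, dE(t)$. The key geometric fact I need is: $\lambda \in \sigma_{\mathrm{ess}}(T)$ if and only if $\dim\bigl(E(I_\varepsilon) H\bigr) = \infty$ for every open interval $I_\varepsilon = (\lambda-\varepsilon,\lambda+\varepsilon)$. Indeed, if some such projection has finite rank, then on its orthogonal complement $A_\lambda$ is invertible with norm $\ge \varepsilon$, and on the range it reduces to a finite-dimensional self-adjoint operator, so $\lambda$ is isolated in $\sigma(T)$ and of finite multiplicity, i.e.\ $\lambda \in \sigma_{\mathrm{disc}}(T)$. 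With this fact in hand, the forward direction is straightforward: pick an orthonormal sequence $u_n \in E\bigl((\lambda - 1/n, \lambda + 1/n)\bigr) H$, which exists because each such subspace is infinite-dimensional and each previous $u_1,\dots,u_{n-1}$ can be avoided; orthonormality forces $u_n \rightharpoonup 0$, while the functional calculus yields $\|A_\lambda u_n\|^2 = \int_{|t-\lambda|<1/n}(t-\lambda)^2\, d\langle E(t)u_n,u_n\rangle \le 1/n^2$.

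For the converse in the second bullet, I would argue by contradiction: assume a singular Weyl sequence exists but $\lambda \notin \sigma_{\mathrm{ess}}(T)$. The case $\lambda \in \rho(T)$ is ruled out by the first bullet's reverse direction. In the remaining case $\lambda \in \sigma_{\mathrm{disc}}(T)$, let $P$ be the orthogonal projection onto the finite-dimensional space $\ker A_\lambda$; since $P$ has finite rank and $u_n \rightharpoonup 0$, we get $P u_n \to 0$ strongly. On $\mathrm{Range}(I-P)$ the operator $A_\lambda$ has bounded inverse (its spectrum is bounded away from $0$ by isolation), so $\|(I-P)u_n\| \le C\|A_\lambda (I-P) u_n\| = C\|A_\lambda u_n\| \to 0$, forcing $\|u_n\| \to 0$ and contradicting normalization. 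The main technical obstacle is the structural fact about spectral projections at points of the essential spectrum; everything else is elementary once that fact is in place. Since the statement is entirely standard, I would in practice simply refer to \cite{ReedSimon} or \cite{BirmanSolomjak} rather than reproduce the spectral theorem machinery.
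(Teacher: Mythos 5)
The paper does not actually prove Theorem~\ref{th.weyl}: it is stated as a recalled standard fact, with references to \cite{ReedSimon} and \cite{BirmanSolomjak}, so there is no in-paper proof to compare against. Your sketch is correct and reproduces the standard textbook argument: the contrapositive operator-norm estimate and the closed-range/self-adjointness argument for the first bullet, the infinite-rank spectral-projection dichotomy plus a Bessel/functional-calculus construction of an orthonormal Weyl sequence for the forward direction of the second bullet, and a finite-rank projection argument for its converse. One micro-imprecision: from $\dim E(I_\varepsilon)H < \infty$ you infer $\lambda \in \sigma_{\mathrm{disc}}(T)$, whereas in general one only gets $\lambda \notin \sigma_{\mathrm{ess}}(T)$ (it could equally be that $\lambda \in \rho(T)$); since that disjunction is exactly what your argument needs, nothing is harmed, but the ``i.e.'' is slightly too strong as written.
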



\end{document}